\let\oldtocsection=\tocsection
\let\oldtocsubsection=\tocsubsection
\let\oldtocsubsubsection=\tocsubsubsection
\renewcommand{\tocsection}[2]{\hspace{0em}\oldtocsection{#1}{#2}}
\renewcommand{\tocsubsection}[2]{\hspace{1em}\oldtocsubsection{#1}{#2}}
\renewcommand{\tocsubsubsection}[2]{\hspace{2em}\oldtocsubsubsection{#1}{#2}}
\newcommand{\R}{\mathbb{R}}
\def\real     #1{{\mathbb R^{#1}}}
\def\natural  #1{{\mathbb N^{#1}}}
\def\dt       {{\partial}_{t}}
\def\equationcolor {\color{black}}
\def\textcolor     {\color{black}}
\def\bcoleq    {\begin{equation}\equationcolor}
\def\ecoleq    {\textcolor\end{equation}}
\def\bcoleqn   {\equationcolor\begin{eqnarray}}
\def\ecoleqn   {\end{eqnarray}\textcolor}
\def\gm{{\operatorname{g}_M}}
\def\gn{{\operatorname{g}_N}}
\def\gk{{\operatorname{g}_{M\times N}}}
\def\gp{{\operatorname{g}_{P}}}
\def\rm{{\operatorname{R}_M}}
\def\rn{{\operatorname{R}_N}}
\def\rk{{\operatorname{R}_{M\times N}}}
\def\sk{{\operatorname{s}_{M\times N}}}
\def\rind{\operatorname{R}}
\def\sind{\operatorname{S}}
\def\tind{\operatorname{T}}
\def\pind{\operatorname{p}}
\def\dF{dF}
\def\df{df}
\def\gind{\operatorname{g}}
\def\Ric{\operatorname{Ric}}
\def\BR{\operatorname{BRic}}
\def\e{\operatorname{euc}}
\DeclareMathOperator*{\Scal}{Scal}
\DeclareMathOperator*{\Id}{Id}
\DeclareMathOperator*{\trace}{trace}
\DeclareMathOperator*{\rank}{rank}
\newtheorem{theorem}{Theorem}[section]
\newtheorem{mythm}{Theorem}
\newtheorem{lemma}[theorem]{Lemma}
\newtheorem{mycor}[mythm]{Corollary}
\newtheorem{proposition}[theorem]{Proposition}
\newtheorem{definition}[theorem]{Definition}
\theoremstyle{definition}
\newtheorem{remark}[theorem]{Remark}
\numberwithin{equation}{section}
\begin{document}

\title[Graphical mean curvature flow with bounded bi-Ricci curvature]{Graphical mean curvature flow with bounded bi-Ricci curvature}
\author[Renan Assimos]{\textsc{Renan Assimos}}
\address{%
	\hspace{-14pt} R. Assimos, Leibniz University Hannover,
	Institute of Differential Geometry,
	Welfengarten 1,
	30167 Hannover, Germany}
	\email{renan.assimos@math.uni-hannover.de}

\author[Andreas Savas-Halilaj]{\textsc{Andreas Savas-Halilaj}}
\address{%
         \hspace{-12pt}A. Savas-Halilaj,
	University of Ioannina,
	Department of Mathematics,
	Section of Algebra and Geometry,
	45110 Ioannina, Greece}
\email{ansavas@uoi.gr}

\author[Knut Smoczyk]{\textsc{Knut Smoczyk}}
\address{%
	\hspace{-12pt}K. Smoczyk,\!
	Leibniz University Hannover,\!
	Institute of Differential Geometry,
	Riemann Center for Geometry and Physics,
	Welfengarten 1,
	30167 Hannover, Germany}
	\email{smoczyk@math.uni-hannover.de}

\date{}

\subjclass[2010]{53E10\and 53C42\and 57R52\and 35K55}
\keywords{%
	Mean curvature flow, area decreasing maps, minimal maps}
\thanks{The second author is supported by
HFRI: Grant 133, and the third by DFG SM 78/7-1}

\begin{abstract}
We consider the graphical mean curvature flow of strictly area decreasing maps $f:M\to N$, where $M$
is a compact Riemannian manifold of dimension $m> 1$ and $N$ a complete Riemannian surface of bounded geometry. We prove long-time existence of the flow and that the strictly area decreasing property is preserved, when the bi-Ricci curvature $\BR_M$ of $M$ is bounded from below by the sectional curvature $\sigma_N$ of $N$. In addition,
	we obtain smooth convergence to a minimal map if
	$\Ric_M\ge\sup\{0,{\sup}_N\sigma_N\}$.
	These results significantly improve known results on the graphical mean curvature flow in codimension $2$. 
\end{abstract}
\maketitle

\section{Introduction and summary}
	\noindent Suppose $f:M\to N$ is a smooth map between the Riemannian manifolds $M$ and $N$ and let
	$$\varGamma_f:=\big\{(x,f(x))\in M\times N:x\in M\big\}$$
	denote the {\em graph} of $f$. We deform $\varGamma_f$ by the mean curvature flow. Some general questions are whether the flow stays graphical, it exists for all times, and it converges to a minimal graphical submanifold $\varGamma_{\infty}$ generated by a smooth map $f_\infty:M\to N$. In this case, $f_\infty$ is called a {\em minimal map} and can be regarded as a {\em canonical representative} of the homotopy class of $f$.
	
	The first result concerning the evolution of graphs by its mean curvature was obtained by Ecker and Huisken \cite{ecker}. They proved long-time existence of the mean curvature flow of entire graphical hypersurfaces in the euclidean space and convergence to flat subspaces under the assumption that the graph is {\em straight} at infinity. For maps between arbitrary Riemannian manifolds the situation is more complicated. However, under suitable conditions on the differential of
	$f$ and on the curvatures of $M$ and $N$, it is possible to establish long-time existence and convergence of the graphical mean curvature flow; for example see \cite{lubbe1,ss2,ss1,ss0,stw,tsuiwang,wang}.
	
	A smooth map $f:M\to N$ between Riemannian manifolds is called {\em strictly area decreasing}, if 
	$$|\df(v)\wedge \df(w)\vert<\vert v\wedge w\vert,\text { for all }v,w\in TM.$$
	One of the first results for the graphical mean curvature flow in higher codimension was obtained by Tsui and Wang \cite{tsuiwang}, where they proved that each initial strictly area decreasing map $f:\mathbb{S}^m\to\mathbb{S}^n$ between unit spheres of dimensions $m,n\ge 2$ smoothly converges to a constant map under the flow.  This result has been generalized much further by other authors; see for instance \cite{ss2, ss0}. In \cite{ss2} we proved that the mean curvature flow smoothly deforms a strictly area decreasing map $f:M\to N$ into a constant one, if $M$ and $N$ are compact, the Ricci curvature $\Ric_M$ of $M$ and the sectional curvatures $\sigma_M$ and $\sigma_N$ of $M$ and $N$, respectively, satisfy
	\begin{equation}\label{oold}
	\sigma_M >-\sigma\quad\text{and}\quad \Ric_M \ge(m-1)\sigma\ge(m-1)\sigma_N\tag{O}
	\end{equation}
	for some positive constant $\sigma>0$, where $m$ is the dimension of $M$. Optimal results were obtained in \cite{ss0} for area decreasing maps between surfaces.
	
	We consider area decreasing maps $f:M\to N$, where $M$ is compact and $N$ is a complete
	surface $N$ with bounded geometry, that is the curvature of $N$ and its derivatives of all orders are uniformly bounded, and the injectivity radius is positive. In order to state our main results, we need to introduce some curvature conditions.
\begin{definition}
	Let $(M,\gm)$ be a Riemannian manifold of dimension $m>1$ and let $(N,\gn)$ be a Riemannian surface.
	For any pair of orthonormal vectors $v,w$ on $M$, the bi-Ricci curvature $\BR_{M}$ is given by
	$$
	{\BR}_M(v,w)=\Ric_M(v,v)+\Ric_M(w,w)-\sigma_M(v\wedge w),
	$$
	where $\Ric_M$ is the Ricci curvature and $\sigma_M$ the sectional curvature of $M$.
	\begin{enumerate}[\normalfont(A)]
		\item \label{curvature main} We say that the curvature condition \eqref{curvature main} holds, if the bi-Ricci curvature
		of $M$ is bounded from below by the sectional curvature of $N$, that is if
		${\BR}_M\ge{\sup}_N\sigma_N$.
		\smallskip
		\item\label{curvature b} We say that the curvature condition \eqref{curvature b} holds, if the Ricci curvature of $M$ is non-negative.
		\smallskip
		\item \label{curvature c} We say that the curvature condition \eqref{curvature c} holds, if the Ricci curvature  of $M$ is bounded from below by the sectional curvature of $N$, that is if
$\Ric_M\,\ge \,{\sup}_N\sigma_N$.
	\end{enumerate}
\end{definition}

The concept of bi-Ricci curvature was introduced by Shen and Ye \cite{shen}. Note that the condition \eqref{curvature c} implies \eqref{curvature b} if ${\sup}_N\sigma_N\ge 0$ and that \eqref{curvature b} implies \eqref{curvature c} if ${\sup}_N\sigma_N\le 0$. In particular, conditions \eqref{curvature b} and \eqref{curvature c} are equivalent if ${\sup}_N\sigma_N=0$. We will discuss these conditions in detail in 
Remark \ref{remark curvature}.
	
Our main results are  stated in Theorems \ref{main}, \ref{minimal}
and its corollaries which are presented in Section \ref{longtime}. Roughly speaking, in Theorem \ref{main} we obtain long-time existence
of the mean curvature flow of area decreasing maps
under the condition \eqref{curvature main} and convergence to minimal maps under the conditions 
\eqref{curvature main}, \eqref{curvature b}, and \eqref{curvature c}. The proof of Theorem \ref{main}
relies on an estimate for the mean curvature of the evolving submanifolds and a Bernstein type theorem for minimal graphs. The classification of these minimal
maps will be presented in Theorem \ref{minimal}.  The proofs of Theorems \ref{main} and \ref{minimal} are given in Section \ref{proofs}.

\section{Long-time existence and convergence of the flow}\label{longtime}
\noindent The main results for the mean curvature flow are stated below.

\begin{mythm}\label{main}
Let $(M,\gm)$ be a compact Riemannian manifold of dimension $m>1$ and let $(N,\gn)$ be a complete Riemannian surface of bounded geometry. Suppose $f_0:M\to N$ is strictly area decreasing.
\begin{enumerate}[\textnormal{(a)}]
\item[\textnormal{(a)}]
If the curvature condition \eqref{curvature main} holds, that is
$\BR_M\ge {\sup}_N\sigma_N$,
then the induced graphical mean curvature flow exists for $t\in[0,\infty)$, and the evolving maps $f_t:M\to N$ remain strictly area decreasing for all $t$.
\medskip
\item[\textnormal{(b)}]
If the curvature conditions \eqref{curvature main} and \eqref{curvature b} hold, that is 
$\BR_M\ge {\sup}_N\sigma_N$ and $\Ric_M\ge 0$,
then $\{f_t\}_{t\in[0,\infty)}$ is uniformly bounded in $C^1(M,N)$ and remains uniformly strictly area decreasing. 
\medskip
\item[\textnormal{(c)}]
If the curvature conditions \eqref{curvature main} and \eqref{curvature c} hold, that is
$\BR_M\ge {\sup}_N\sigma_N$ and $\Ric_M\ge{\sup}_N\sigma_N$,
then the mean curvature stays uniformly bounded. If $\{f_t\}_{t\in[0,\infty)}$ is uniformly bounded in $C^1(M,N)$, then $\{f_t\}_{t\in[0,\infty)}$ is uniformly bounded in $C^k(M,N)$, for all $k\ge 1$.
\medskip
\item[\textnormal{(d)}]
Suppose that the curvature conditions \eqref{curvature main}, \eqref{curvature b} and \eqref{curvature c} hold, that is we have
$\BR_M(v,w)\ge {\sup}_N\sigma_N$ and $\Ric_M(v,v)\ge\max\{0,{\sup}_N\sigma_N\}$.
Then we get the following results:
\smallskip
\begin{enumerate}[\normalfont(1)]
	\item $\{f_t\}_{t\in[0,\infty)}$ is uniformly bounded in $C^k(M,N)$, for all $k\ge 1$.
	\smallskip
	\item In the following cases $\{f_t\}_{t\in[0,\infty)}$ is uniformly bounded in $C^0(M,N)$:
	\begin{enumerate}[\normalfont(i)]
		\item $\Ric_M>0$.
		\item $N$ is compact.
		\item ${\sup}_N\sigma_N\le 0$ and $N$ is simply connected.
		\item ${\sup}_N\sigma_N\le 0$ and $N$ contains a totally convex subset $\mathscr{C}$; that is $\mathscr{C}$ contains any geodesic in $N$ with endpoints in $\mathscr{C}$.
		\item There exists $c\in\real{}$ and a smooth function $\psi:N\to\real{}$ such that $\psi$ is convex on the set $N^c:=\{y\in N:\psi(y)<c\}$, $\overline {N^c}$ is compact and  $f_0(M)\subset N^c$.
		\end{enumerate}
	\smallskip
	\item Under the assumption that the family $\{f_t\}_{t\in[0,\infty)}$ is uniformly bounded in $C^k(M,N)$, for all $k\ge 0$, the following holds:
	\smallskip
	\begin{enumerate}[\normalfont(i)]
		\item There exists a subsequence $\{f_{t_n}\}_{n\in\natural{}}$, $\lim_{n\to\infty}t_n=\infty$,  that smoothly converges to one of the minimal maps classified in Theorem \ref{minimal}. 
		\smallskip
		\item If there exists a subsequence $\{f_{t_n}\}_{n\in\natural{}}$ of the family $\{f_t\}_{t\in[0,\infty)}$ that converges in $C^0(M,N)$ to a constant map, then the whole flow $\{f_t\}_{t\in[0,\infty)}$ smoothly converges to this constant map.
		\smallskip
		\item If there exists a point $x\in M$ such that $\Ric_M(x)>0$, then the flow $\{f_t\}_{t\in[0,\infty)}$ smoothly converges to a constant map.
		\smallskip
		\item If $(M,\gm)$ and $(N,\gn)$ are real analytic, then the flow smoothly converges to one of the minimal maps classified in Theorem \ref{minimal}.
	\end{enumerate}
\end{enumerate}
\end{enumerate}
\end{mythm}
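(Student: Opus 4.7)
The plan is to reduce the entire theorem to a maximum principle for a single scalar $\Phi$ on the evolving graph $\varGamma_{f_t}$, designed so that $\Phi>0$ exactly captures strict area-decrease. A natural choice is $\Phi=1-\lambda_1\lambda_2$, where $\lambda_1\ge\lambda_2\ge 0$ are the singular values of $df_t$; equivalently $\Phi=*\Omega$ for a suitable $2$-form $\Omega$ on $M\times N$ whose restriction to $\varGamma_{f_t}$ measures area distortion. I would compute $(\partial_t-\Delta)\Phi$ along the flow and show that the reaction term decomposes into a non-positive quadratic in the second fundamental form plus a curvature expression evaluated on an orthonormal pair $v,w$ adapted to the singular-value frame. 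The crucial step is to identify this curvature expression as precisely
\[
\BR_M(v,w)-\sigma_N,
\]
at which point condition \eqref{curvature main} makes the reaction term non-positive and the parabolic maximum principle preserves $\Phi>0$, giving part (a). Long-time existence then follows because a positive lower bound for $\Phi$ on $[0,T)$ keeps the tangent planes uniformly transversal to the $N$-factor, which together with the bounded geometry of $N$ excludes finite-time singularities via standard Ecker--Huisken type estimates.

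Under the extra hypothesis $\Ric_M\ge 0$ of part (b), feeding this back into the evolution of $\Phi$ produces an additional non-positive zeroth-order term and yields a uniform lower bound $\inf_M\Phi(t)\ge c>0$, hence a uniform $C^1$-bound. For (c), I would derive a parabolic inequality for $|H|^2$ (or equivalently for $\ln\eta$ with $\eta$ the Jacobian of the projection $\varGamma_{f_t}\to M$) whose curvature contribution carries the sign of $\Ric_M-\sigma_N$; condition \eqref{curvature c} makes it non-positive, so the maximum principle gives $\sup_M|H|^2(t)\le\sup_M|H|^2(0)$. With $|H|$ and the $C^1$-norm both uniformly controlled, standard parabolic bootstrap in the bounded-geometry setting of $N$ delivers uniform $C^k$-bounds for all $k\ge 1$, which is (c).

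Part (d)(1) is the combination of (a)--(c). For the $C^0$-confinement alternatives in (d)(2), each case invokes a different target-side ingredient: (ii) is trivial; (iii) and (iv) lift the flow to the universal cover of $N$ and confine the image either by a totally convex set or by a standard Cartan--Hadamard convex function; (v) applies the parabolic maximum principle to $\psi\circ f_t$ to keep $f_t(M)\subset N^c$ for all $t$; and (i) exploits the strict-sign version of the $\Phi$-evolution, which now contains a strictly negative zeroth-order term forcing $\lambda_1\lambda_2\to 0$ and the image to contract. For (d)(3): subsequential smooth convergence to a minimal map as in item (i) follows from uniform $C^k$-bounds together with $\int_0^\infty\!\int_M|H|^2<\infty$, coming from monotone volume decrease along the flow; items (ii) and (iii) are stability arguments near a constant limit, and (iv) is the classical Simon--\L{}ojasiewicz argument for real-analytic functionals.

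The main technical obstacle is the identification of the bi-Ricci curvature in the evolution of $\Phi$. Earlier results along these lines (for example \cite{ss2,ss1,ss0}) controlled a similar quantity under the mixed hypothesis \eqref{oold} on sectional and Ricci curvatures, and extracting precisely the combination $\Ric_M(v,v)+\Ric_M(w,w)-\sigma_M(v\wedge w)$ will require an optimal choice of $(v,w)$ inside the singular-value frame of $df_t$ and careful bookkeeping of the curvature terms that is special to codimension $2$. A secondary difficulty is the non-compactness of $N$: every maximum principle argument, from preservation of $\Phi>0$ through the $|H|$-bound to the various $C^0$-confinement results, must avoid compactness of the target, and this is precisely where the bounded geometry assumption on $N$ is used in an essential way.
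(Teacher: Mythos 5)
Your outline reproduces the paper's skeleton for the preserved quantity (a scalar capturing strict area-decrease whose reaction term contains $\BR_M-\sigma_N$, which is indeed how the paper proceeds with $\pind=\frac{2(1-\lambda^2\mu^2)}{(1+\lambda^2)(1+\mu^2)}$), but the two core analytic steps are not correct as proposed. First, long-time existence in part (a) and the passage from $C^1$ to $C^k$ in part (c) do \emph{not} follow from ``transversality plus bounded geometry via standard Ecker--Huisken type estimates'' or from ``parabolic bootstrap once $|H|$ and the $C^1$-norm are controlled.'' Ecker--Huisken-type curvature estimates are a codimension-one phenomenon; in codimension two a uniform graphical/$C^1$ bound gives no control on $|A|$, and the flow is a genuine parabolic \emph{system} for which there is no scalar-type regularity theory to bootstrap from $C^1$. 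The paper instead argues by contradiction through a blow-up at a hypothetical singularity (Proposition 6.1): the uniform mean curvature bound forces the rescaled limits to be complete \emph{minimal} immersions, the entire graph lemma (Lemma 5.4) together with the preserved uniform $C^1$ bound shows the limit is an entire strictly area decreasing minimal graph over $\real{m}$ in $\real{m+2}$, and the Bernstein theorem of Assimos--Jost then makes it totally geodesic, contradicting $|A|=1$ at the blow-up basepoint. This blow-up/Bernstein mechanism is entirely absent from your proposal, and without it neither (a) nor the second assertion of (c) is proved.

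Second, the mean curvature estimate itself cannot be obtained by applying the maximum principle directly to $|H|^2$: its evolution inequality contains the positive term $2|A|^2|H|^2$, which no curvature hypothesis removes, so $\sup_M|H|^2$ is not monotone. The paper's key device is the quotient $\varTheta=|H|^2/\pind$, in whose evolution the term $2|A|^2|H|^2$ cancels against the $2\pind|A|^2$ term from the $\pind$-equation, and after the identity expressing $\Ric_M(v,v)+\Ric_M(w,w)$ in terms of $\Ric_M(\alpha_1,\alpha_1)$, $\Ric_M(\alpha_2,\alpha_2)$ only $-2\pind^{-1}\bigl(\Ric_M(w,w)-\sigma_N|w|^2\bigr)$ survives; this is precisely where condition \eqref{curvature c} enters. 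You would need this quotient (or an equivalent cancellation) for part (c), and also for part (a), since the finite-time bound on $|H|$ is what makes the blow-up limit minimal. Smaller issues: your sign conventions are inverted (to preserve $\Phi>0$ the reaction must be favourable, i.e.\ bounded below, not ``non-positive''); in (d)(2)(iv) lifting to the universal cover is neither needed nor justified (the maps $f_t$ need not lift), the paper simply uses convexity of the distance to the totally convex set $\mathscr{C}$ together with the barrier theorem; and in (d)(2)(i) one needs the exponential decay $|\df_t|^2\le c_1e^{-\varepsilon_0 t}$ (not merely $\lambda_1\lambda_2\to0$) plus a convexity trapping argument to confine the shrinking images. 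The convergence statements in (d)(3) are essentially as in the paper, except that (iii) also requires the classification of minimal limits (Corollary of Theorem \ref{minimal}) to know the limit is constant.
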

Let us discuss now some interesting corollaries of Theorem \ref{main}. 
\begin{mycor}\label{cor B}
	Let $M$ be a compact manifold with non-vanishing Euler characteristic $\chi(M)$, and let $N$ be a compact Riemann surface of genus bigger than one.
	\begin{enumerate}[\normalfont(a)]
		\item[\normalfont(a)] If $M$ is K\"ahler with vanishing first Chern class $c_1(M)$, then any smooth map $f:M\to N$ is smoothly null-homotopic.
		\smallskip
		\item[\normalfont(b)] More generally, the same result holds if $M$ admits a  metric of non-negative Ricci curvature.
	\end{enumerate}
\end{mycor}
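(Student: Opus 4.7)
Plan. Part (a) follows from (b) by Yau's theorem: a compact Kähler manifold with $c_{1}(M)=0$ carries a Ricci-flat Kähler metric, which satisfies $\Ric_M\equiv 0\ge 0$. So the whole work is in (b).

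For (b), the idea is to run the mean curvature flow of Theorem \ref{main} to produce a smooth homotopy of $f$ to a minimal map, and then use topology to show the minimal map is null-homotopic. Equip $N$ with its hyperbolic metric (available since the genus of $N$ is at least two), so $\sigma_N\equiv -1$ and $\sup_N\sigma_N<0$. Fix a metric on $M$ with $\Ric_M\ge 0$ and rescale it homothetically, $\gm\leadsto\lambda^{2}\gm$. Under such a rescaling the normalized Ricci and bi-Ricci curvatures of $M$ shrink by $1/\lambda^{2}$, while the area-distortion ratio $|df(v)\wedge df(w)|_{\gn}/|v\wedge w|_{\gm}$ shrinks by the same factor. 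Hence for $\lambda$ sufficiently large any preassigned smooth map $f:M\to N$ becomes strictly area decreasing and the three curvature conditions \eqref{curvature main}, \eqref{curvature b}, \eqref{curvature c} all hold simultaneously. Since $N$ is compact, case (d)(2)(ii) of Theorem \ref{main} automatically yields a $C^{0}$-bound on $\{f_t\}_{t\ge 0}$; (d)(1) upgrades it to uniform $C^{k}$-bounds for every $k$, and (d)(3)(i) extracts a subsequence $f_{t_n}$ that converges smoothly to a minimal map $f_\infty:M\to N$.

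The hypothesis $\chi(M)\ne 0$ combined with $\Ric_M\ge 0$ forces $\pi_1(M)$ to be finite: by Cheeger--Gromoll's structure theorem a finite cover of $M$ is diffeomorphic to $\torus{k}\times M'$ with $M'$ compact and simply connected, and the multiplicativity $\chi(\torus{k}\times M')=\chi(\torus{k})\chi(M')$ vanishes whenever $k\ge 1$. Because $N$ is aspherical (its universal cover is $\mathbb{H}^{2}$) with torsion-free fundamental group, every homomorphism $\pi_1(M)\to\pi_1(N)$ is trivial, and hence the set of free homotopy classes $[M,N]\cong\Hom(\pi_1(M),\pi_1(N))/\text{conj}$ is a singleton. (Alternatively, one could invoke Theorem \ref{minimal} to see that $f_\infty$ is either constant or factors through a geodesic of $N$, and then note that $H^{1}(M;\mathbb{Z})=\Hom(H_{1}(M;\mathbb{Z}),\mathbb{Z})=0$ kills any map to a circle.) The flow itself therefore provides the desired smooth homotopy $f=f_{0}\simeq f_{t_n}\simeq f_\infty\simeq\ast$ for $n$ large.

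The main obstacle is the calibration step in the second paragraph: arranging, by a single homothetic rescaling, that strict area decrease and the three curvature conditions \eqref{curvature main}, \eqref{curvature b}, \eqref{curvature c} are all in force at once. Once this is verified, Theorem \ref{main}(d) carries the analytic content, and the conclusion becomes a clean interplay between the Cheeger--Gromoll structure theorem and the asphericity of $N$.
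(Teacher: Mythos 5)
Your proposal is correct, and its analytic core coincides with the paper's: the paper shrinks the target metric ($\gind_r=r^2\gind_N$, $r$ small), you enlarge the source metric ($\lambda^2\gm$, $\lambda$ large), and these are the same device up to an overall homothety of the product, so your "main obstacle" is settled exactly as in the paper; after that, both arguments invoke Theorem \ref{main}(d) with $N$ compact to get uniform bounds and smooth subconvergence to a minimal map. Where you genuinely diverge is the endgame. The paper concludes by feeding the limit into Theorem \ref{minimal}: any non-constant strictly area decreasing minimal map forces $\chi(M)=0$, so $\chi(M)\neq 0$ makes $f_\infty$ constant and the flow itself gives the smooth null-homotopy. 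You instead argue topologically that $\Ric_M\ge 0$ and $\chi(M)\neq 0$ force $\pi_1(M)$ finite (Cheeger--Gromoll; note the standard statement gives a finite cover fibering over a flat torus with compact simply connected fiber rather than literally a product, but multiplicativity of $\chi$ for fiber bundles still yields $k=0$), and that asphericity plus torsion-freeness of $\pi_1(N)$ then makes $[M,N]$ a single point. That argument is valid, but it proves the corollary outright without any flow (a continuous null-homotopy into a smooth manifold can be smoothed), so in your main line the mean curvature flow is logically redundant and only serves to exhibit the homotopy as a flow line; the paper's proof, by contrast, uses the flow plus the classification in an essential way. Your parenthetical alternative (Theorem \ref{minimal} plus $H^1(M;\mathbb{Z})=0$, the latter following from $\pi_1(M)$ finite or from Bochner) is essentially the paper's route, and is in fact simpler than stated: with the hyperbolic metric one has $\sigma_N<0$, so the rank-two case of Theorem \ref{minimal} is impossible, and $\chi(M)\neq 0$ excludes $\rank(df_\infty)\ge 1$ directly, leaving the constant case without any appeal to $H^1$.
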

\begin{proof}
	(a) If $M$ is a K\"ahler manifold with vanishing first Chern class, then by a famous theorem of Yau \cite{yau}, $M$ admits a Ricci flat K\"ahler metric and this case can be reduced to part (b).
	
	(b) Let $\gm$ be a metric of non-negative Ricci curvature on $M$. Since $N$ has genus bigger than one, we can endow $N$ with a complete Riemannian metric $\gind_N$ of constant negative curvature $\sigma_N$.
Since $\dim N=2$, the map $f:M\to N$ has at most two non-trivial singular values $\lambda\ge\mu$
with respect to the metrics $\gm$ and $g_N$.
For a constant $r>0$ define the new metric $\gind_r:=r^2\gind_N$. Then the sectional curvature $\sigma_r$ of $\gind_r$, and the singular values $\lambda_r$ and $\mu_r$ of $f$ with respect to $\gm$ and $\gind_r$ are given by
	$\lambda_r=r\lambda$, $\mu_r=r\mu$ and $\sigma_r=r^{-2}\sigma_N.$
	If we choose $r$ sufficiently small, then $f$ will be strictly area decreasing with respect to $\gm$, $\gind_r$ and $\sigma_r$ will be so small that all curvature conditions in \eqref{curvature main}, \eqref{curvature b} and \eqref{curvature c} are satisfied. Applying the mean curvature flow to the graph $\varGamma_f$ of $f$ in $(M,\gm)\times (N,\gn)$, Theorem \ref{main}(d) and Theorem \ref{minimal} imply that $f$ is homotopic to a constant map if $M$ does not have vanishing Euler characteristic.
\end{proof}
\begin{remark}
	Most of the Calabi-Yau manifolds have non-vanishing Euler characteristic. For example, the Euler number of K3-surfaces is $24$. The statement in (b)  cannot be extended to the case where $N$ is $\mathbb{S}^2$ or $\mathbb{T}^2$. Neither the Hopf fibration $f:\mathbb{S}^{3}\to\mathbb{S}^{2}$ nor the projections $\pi_{\mathbb{S}^2}:\mathbb{S}^1\times\mathbb{S}^2\to\mathbb{S}^2$,  $\pi_{\mathbb{T}^2}:\mathbb{S}^1\times\mathbb{T}^2\to\mathbb{T}^2$ are homotopic to a constant map or to a geodesic.
\end{remark}
\begin{remark}\label{remark curvature}
	It is easy to construct examples of long-time existence but no convergence. Take $M=\mathbb{S}^1\times\mathbb{S}^2$ with the standard product metric and for $N$ choose $\mathbb{S}^1\times\real{}$ with a rotationally symmetric metric of negative curvature; see  Figure \ref{areaAB}(a). Then $\Ric_M\ge 0\ge\sigma_N$ and the condition \eqref{curvature main} is satisfied. Fix $z_0\in\real{}$, let $c_0:\mathbb{S}^1\to N$ be the circle $c_0(s)=(s,z_0)$ and define $f_0(s,p):=c_0(s)$,  $(s,p)\in \mathbb{S}^1\times\mathbb{S}^2$. Clearly, $f_0$ is strictly area decreasing. The solution $f_t$ to the mean curvature flow will be of the form $f_t(s,p)=(s,z(t))$, where $z:[0,\infty)\to\real{}$ is a smooth function that becomes unbounded when $t\to\infty$. All conditions in Theorem \ref{main}(d) are satisfied, except those in (2) guaranteeing $C^0$-bounds. However, the solution is uniformly bounded in $C^k(M,N)$ for all $k\ge 1$ and the mean curvature tends to zero in $L^2$ for $t\to\infty$.
	Nevertheless, there exist rotationally symmetric hyperbolic metrics on the cylinder for which we can apply Theorem \ref{main}(d). For example, the closed geodesic $\mathscr{C}$ on the one-sheet hyperboloid depicted in Figure \ref{areaAB}(b) is totally convex.
\begin{figure}[!ht]
	\includegraphics[width=.48\textwidth]{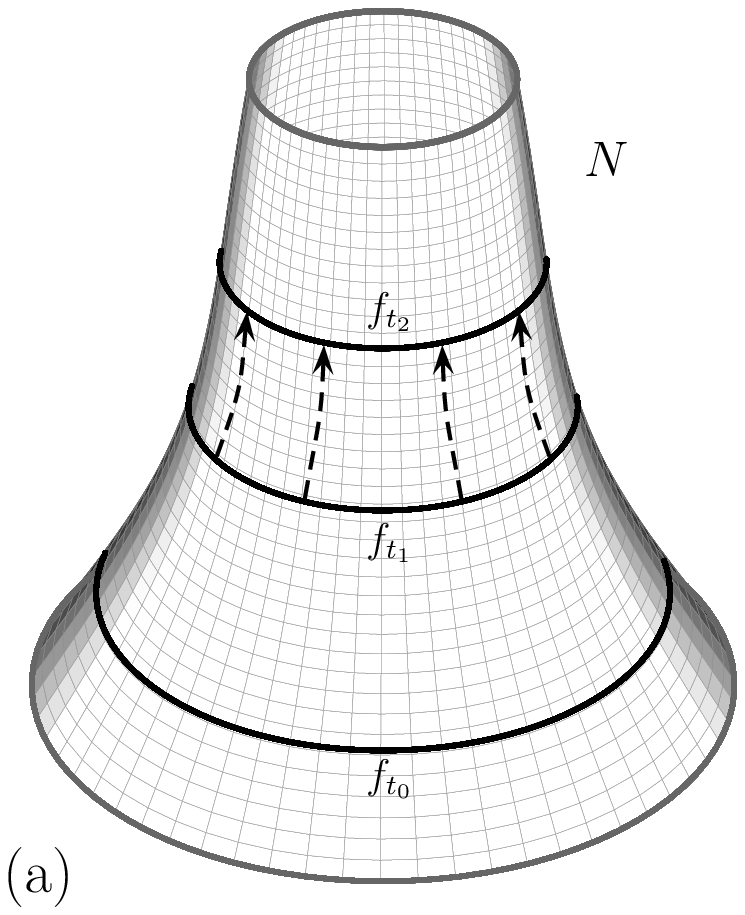}
	\includegraphics[width=.48\textwidth]{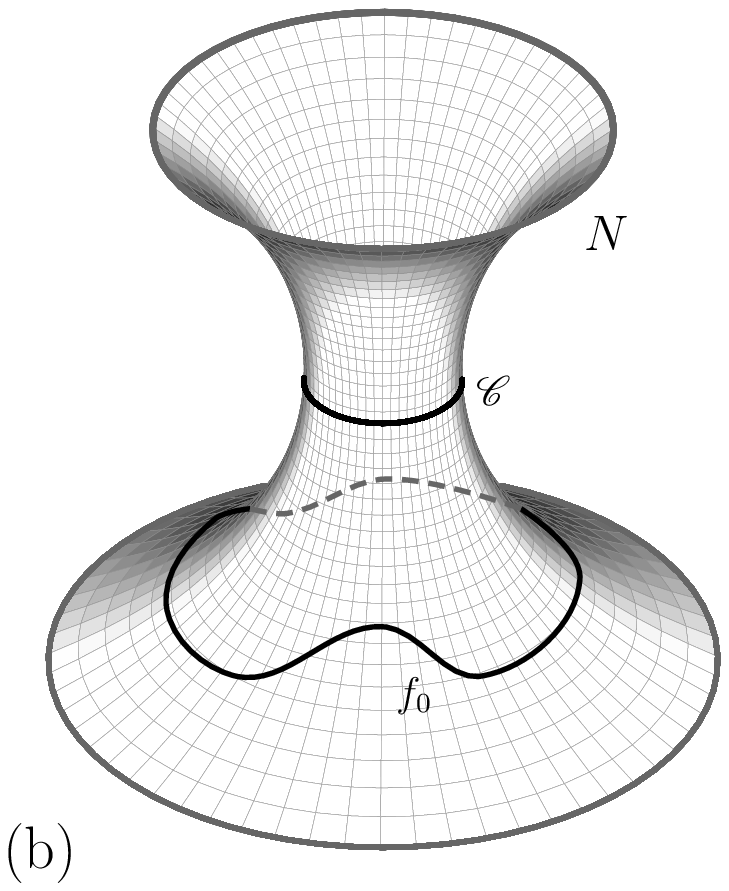}
	\begin{minipage}[t]{1.08\textwidth}
		\caption{(a) Negatively curved surface without closed geodesics. (b) Negatively curved
		surface with a totally convex subset.}
		\label{areaAB}%
	\end{minipage}
\end{figure}
\end{remark}
\begin{remark}\label{remark 1}
	We add some remarks concerning the curvature conditions and the results of  Theorem \ref{main}.
	\begin{enumerate}[(a)]
		\item In dimensions $m=2,3$, the curvature condition \eqref{curvature main} is
		 equivalent to
		$\Scal_M\ge\Scal_N,$ where these are the scalar curvatures of $M$ and $N$.
		Therefore, we recover the main results obtained in \cite{ss0}.
		For $m=4$, \eqref{curvature main} is equivalent to 
		$\operatorname{Scal}_M-2\,{\sigma}_M(v\wedge w)\ge \Scal_N,$ for all $v,w\in TM.$
		\smallskip
		\item
		If $M$ and $N$ satisfy \eqref{curvature main}, then by taking traces 		
		 at each point $x\in M$, the scalar and the Ricci curvatures of $M$ can be estimated by
		\begin{eqnarray}
		(m-3)&\!\!\!\operatorname{Ric}_M(v,v)\!\!\!\!&+\operatorname{Scal}_M\ge (m-1){\sup}_N\sigma_N,\label{curvcond2b}\\
		&\!\!\!{\Scal}_M\!\!\!\!\!\!\!\!\!\!\!\!\!\!&\ge\frac{m(m-1)}{2m-3}{\sup}_N\sigma_N,\label{curvcond3}
		\end{eqnarray}
		and, for $m\neq 3$, equality occurs if and only if at $x$ all sectional curvatures of $M$ are equal.
Thus, if $m\neq 3$ and at each $x\in M$ there exist at least two distinct sectional curvatures, then \eqref{curvature main} can only be satisfied as a strict inequality.
		\medskip
		\item
		The results in \cite{ss2,ss1} were obtained under the assumption \eqref{oold}. It turns out that \eqref{oold} implies \eqref{curvature main} and, in particular, in this case \eqref{curvature main} becomes even strict when $m>2$. Indeed, if $m=2$ the conclusion follows from $\Ric_M(v,v)=\Ric_M(w,w)$ for any $v,w\in TM$. In case $m>2$, it suffices to check this for an orthonormal frame $\{\alpha_1,\dots,\alpha_m\}$ for which the Ricci tensor becomes diagonal. Then, for any $i\neq j$, we get
		\begin{eqnarray*}
		\hspace{+15pt}\BR_M(\alpha_i,\alpha_j)&\!\!\!=\!\!\!&\operatorname{Ric}_M(\alpha_i,\alpha_i)+{\sum}_{k\neq j,i}\sigma_M(\alpha_k\wedge \alpha_j)\\
		&\!\!\!>\!\!\!&(m-1)\sigma-(m-2)\sigma=\sigma\ge{\sup}_N\sigma_N.
		\end{eqnarray*}
		However, \eqref{curvature main} does not imply \eqref{oold}, hence condition \eqref{curvature main} is more general than \eqref{oold}. To obtain a better picture, let us assume that the sectional curvatures of $(M,\gm)$ are all constant to $\sigma_M$ and that the curvature of $N$ is given by a constant $\sigma_N$. The curvature condition \eqref{oold} of \cite{ss2} is then equivalent to
		$\sigma_M\ge \sigma_N$ and $\sigma_M>0.$
		When the sectional curvatures are constant, \eqref{curvature main}, \eqref{curvature b} and \eqref{curvature c} are equivalent to (in this order):
		$$
			(2m-3)\sigma_M\ge\sigma_N,\quad\sigma_M\ge0\quad\text{and}\quad
			(m-1)\sigma_M\ge\sigma_N.
		$$
		Therefore, the results in
	Theorem \ref{main} are stronger than those in \cite{ss2}.
	\end{enumerate}
\end{remark}
Given a map $f:\mathbb{S}^m\to \mathbb{S}^n$ between
unit spheres with singular values
$\lambda_1\ge\lambda_2\ge\cdots\ge\lambda_m$,
the number $\operatorname{Dil}_2(f)=\max\lambda_{1}\lambda_2$, is called the 2-dilation of $f$.
An interesting question is to determine when such a map
is homotopically trivial. In this direction, we obtain the following result.
\begin{mycor}\label{cor cont}
	For the standard unit spheres $(\mathbb{S}^m,\gind_{\mathbb{S}^m})$ and $(\mathbb{S}^2,\gind_{\mathbb{S}^2})$ let us define
	$$\mathcal{A}_{m-1}:=\{f\in C^\infty(\mathbb{S}^m,\mathbb{S}^2):\operatorname{Dil}_2(f)< m-1\}.$$
	Then for $m>1$ and for any $f_0\in\mathcal{A}_{m-1}$ there exists a smooth homotopy $\{f_t\}_{t\in[0,\infty)}\subset\mathcal{A}_{m-1}$ deforming $f_0$ into a constant map. This homotopy can be given by the mean curvature flow of $f_0$ as a map between $(\mathbb{S}^m,\gind_{\mathbb{S}^m})$ and the scaled $2$-sphere $(\mathbb{S}^2,(m-1)^{-1}\gind_{\mathbb{S}^2})$. In particular, $\mathcal{A}_{m-1}$ is smoothly contractible.
\end{mycor}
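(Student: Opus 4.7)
The plan is to reduce the statement to Theorem \ref{main}(d) via a rescaling of the target two-sphere. First, I would endow $\mathbb{S}^2$ with the metric $\gind_r := (m-1)^{-1}\gind_{\mathbb{S}^2}$, which has constant sectional curvature $\sigma_r = m-1$. Under this conformal change the singular values of $f$ scale by $(m-1)^{-1/2}$, so the 2-dilation computed with respect to $\gind_{\mathbb{S}^m}$ and $\gind_r$ becomes $(m-1)^{-1}\operatorname{Dil}_2(f)$. Therefore $f_0\in\mathcal{A}_{m-1}$ if and only if $f_0$ is strictly area decreasing as a map $(\mathbb{S}^m,\gind_{\mathbb{S}^m})\to(\mathbb{S}^2,\gind_r)$.

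Next, I would verify the curvature hypotheses of Theorem \ref{main}. On $(\mathbb{S}^m,\gind_{\mathbb{S}^m})$ every sectional curvature equals $1$, so $\Ric_M=m-1$ and $\BR_M=2m-3$, while $\sup_N\sigma_r = m-1$. For $m\ge 2$ we obtain $\BR_M = 2m-3 \ge m-1$, $\Ric_M = m-1 \ge 0$, and $\Ric_M = m-1 \ge \sup_N\sigma_r$, so all three curvature conditions \eqref{curvature main}, \eqref{curvature b} and \eqref{curvature c} hold. Theorem \ref{main}(d)(1) then yields uniform $C^k$-bounds for every $k\ge 1$; the compactness of $N$ supplies the $C^0$-bound via case (d)(2)(ii); and since $\Ric_M>0$ everywhere on $\mathbb{S}^m$, statement (d)(3)(iii) guarantees that $\{f_t\}_{t\in[0,\infty)}$ converges smoothly to a constant map.

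Finally, Theorem \ref{main}(a) ensures that strict area decrease is preserved along the flow, which translates back to $\operatorname{Dil}_2(f_t)<m-1$ for every $t\ge 0$, so the entire flow lies in $\mathcal{A}_{m-1}$. The smooth contractibility assertion then follows, since this construction exhibits an explicit smooth null-homotopy within $\mathcal{A}_{m-1}$ of any $f_0\in\mathcal{A}_{m-1}$. I expect no serious obstacle beyond careful bookkeeping: the only delicate point is the clean algebraic matching between the threshold $m-1$ in the definition of $\mathcal{A}_{m-1}$ and the constant sectional curvature of the rescaled target, and the fact that condition \eqref{curvature c} is satisfied only with equality, which nevertheless poses no problem since Theorem \ref{main} is formulated with non-strict inequalities throughout.
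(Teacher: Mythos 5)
Your argument is correct and follows essentially the same route as the paper: rescale the target to $(\mathbb{S}^2,(m-1)^{-1}\gind_{\mathbb{S}^2})$ so that $\mathcal{A}_{m-1}$ becomes the set of strictly area decreasing maps, check the curvature conditions (the paper cites the constant-curvature computation of Remark \ref{remark 1}, which you simply carry out directly, obtaining $\BR_M=2m-3\ge m-1$, $\Ric_M=m-1>0$), and then invoke Theorem \ref{main}(d) together with the preservation of the area decreasing property to get the smooth null-homotopy inside $\mathcal{A}_{m-1}$. Your extra observation that condition \eqref{curvature c} holds only with equality, and that this is harmless because the hypotheses are non-strict, is consistent with the paper.
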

\begin{proof}
	Maps in $\mathcal{A}_{m-1}$ are strictly area decreasing maps from $(\mathbb{S}^m,\gind_{\mathbb{S}^m})$ to $(\mathbb{S}^2,(m-1)^{-1}\gind_{\mathbb{S}^2})$. The sectional curvature of $\gn:=(m-1)^{-1}\gind_{\mathbb{S}^{2}}$
	is $m-1$ and the result follows from Remark \ref{remark 1}(e), because in this case the curvature conditions in Theorem \ref{main} are equivalent to $m-1\ge\sigma_N$.
\end{proof}

\begin{remark}
It is well-known that the homotopy groups $\pi_m(\mathbb{S}^2)$ are non-trivial for $m\ge 2$ and are finite for $m\ge 4$;
see \cite{berrick,curtis,gray}.
Consequently, in Corollary \ref{cor cont}, we cannot increase the upper bound for $\operatorname{Dil}_2(f)$ arbitrarily
without losing the contractibility of the corresponding set
	$$\mathcal{A}_{m,c}:=\{f\in C^\infty(\mathbb{S}^m,\mathbb{S}^2):\operatorname{Dil}_2(f)< c\}.$$
A natural problem arises; to determine the number
	$$\mathfrak{c}_m:=\sup\{c>0:\mathcal{A}_{m,c}\text{ is smoothly contractible}\}.$$
The Hopf fibration $f:\mathbb{S}^3\to\mathbb{S}^2$ has constant singular values
$\lambda_1=\lambda_2=2$ and $\lambda_3=0$.
Moreover, it is minimal, but not totally geodesic, and not homotopic to a constant map; see \cite[Remark 1]{markellos}. Hence, from Corollary \ref{cor cont} we see that
	$$2\le\mathfrak{c}_3\le 4 \quad \text{ and }\quad m-1\le\mathfrak{c}_m<\infty,$$
	for $m>2$. Since the identity map $\Id:\mathbb{S}^2\to\mathbb{S}^2$ is not homotopic to the
	constant map, we have that $\mathfrak{c}_2=1$.
\end{remark}
	In dimension three the results in Theorem \ref{main} can be summarized in the following corollary.
	
\begin{mycor}\label{cor C}
	Let $(M,\gm)$ be a compact $3$-manifold and let $(N,\gn)$ be a complete surface of bounded geometry that satisfy the curvature condition
	$$\Ric_M\ge\max\{0,{\sup}_N\sigma_N\}.$$
	Then \eqref{curvature main}, \eqref{curvature b} and \eqref{curvature c} in Theorem {\normalfont\ref{main}} are satisfied and for any 
	strictly area decreasing initial map $f_0:M\to N$ the results in Theorem {\normalfont\ref{main}}{\normalfont(d)} apply.
	\end{mycor}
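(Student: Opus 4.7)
The plan is to check that all three curvature conditions \eqref{curvature main}, \eqref{curvature b} and \eqref{curvature c} follow automatically from the single hypothesis $\Ric_M\ge\max\{0,{\sup}_N\sigma_N\}$. Conditions \eqref{curvature b} and \eqref{curvature c} are tautological, since the hypothesis immediately gives $\Ric_M\ge 0$ and $\Ric_M\ge{\sup}_N\sigma_N$.

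The only nontrivial step is to verify \eqref{curvature main}, and my plan is to exploit the fact that in dimension three the bi-Ricci reduces to half the scalar curvature. Concretely, for any orthonormal pair $v,w$ at $x\in M$ completed to an orthonormal frame $\{v,w,u\}$, a direct computation yields
$$\BR_M(v,w)=\sigma_M(v\wedge w)+\sigma_M(v\wedge u)+\sigma_M(w\wedge u)=\tfrac{1}{2}\Scal_M(x),$$
which is essentially the equivalence already recorded in Remark \ref{remark 1}(a). Tracing the Ricci lower bound yields $\Scal_M\ge 3\max\{0,{\sup}_N\sigma_N\}$, and hence
$$\BR_M(v,w)=\tfrac{1}{2}\Scal_M(x)\ge\tfrac{3}{2}\max\{0,{\sup}_N\sigma_N\}\ge{\sup}_N\sigma_N,$$
the last inequality being trivial when ${\sup}_N\sigma_N\le 0$ and following from $3/2>1$ when ${\sup}_N\sigma_N>0$. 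This establishes \eqref{curvature main}.

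With all three curvature conditions in hand, the conclusion is obtained by a direct application of Theorem \ref{main}(d) to the initial map $f_0$. I do not foresee any genuine obstacle: the corollary amounts to the pleasant observation that, in dimension three, a single Ricci bound already encodes all three curvature hypotheses of Theorem \ref{main}, so the entire analytic burden is carried by that theorem and nothing beyond the scalar-curvature reduction above is needed.
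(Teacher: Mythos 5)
Your proposal is correct and follows essentially the same route as the paper: both reduce condition \eqref{curvature main} in dimension three to a scalar curvature bound (the identity $\BR_M=\tfrac12\Scal_M$, recorded in Remark \ref{remark 1}(a)), trace the Ricci hypothesis, and dispose of the two signs of ${\sup}_N\sigma_N$ separately, with \eqref{curvature b} and \eqref{curvature c} being immediate. The only cosmetic difference is that you package the case split via the $\max$ while the paper phrases it as two explicit cases.
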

\begin{proof}
	The conditions \eqref{curvature b} and \eqref{curvature c} hold  by assumption. Since $m=3$, the  condition \eqref{curvature main} is equivalent to
	$\operatorname{Scal}_M\ge 2\,{\sup}_N\sigma_N.$
	We distinguish two cases.
	\begin{enumerate}[(i)]
		\item ${\sup}_N\sigma_N\ge 0$. In this case \eqref{curvature c} $ \Rightarrow\operatorname{Scal}_M\ge 3\,{\sup}_N\sigma_N\ge 2\,{\sup}_N\sigma_N$.
		\smallskip
		\item ${\sup}_N\sigma_N\le 0$. In this case \eqref{curvature b} $ \Rightarrow\operatorname{Scal}_M\ge 0\ge{\sup}_N\sigma_N\ge 2\,{\sup}_N\sigma_N$.
	\end{enumerate}
	Therefore, the curvature condition \eqref{curvature main}  holds and Theorem \ref{main} applies.
\end{proof}
The next corollary follows from the K\"unneth formula and the fact that compact manifolds with positive Ricci curvature do not admit non-trivial harmonic $1$-forms. We give a proof using mean curvature flow.
\begin{mycor}\label{cor D}
	Let $M=L\times N$ be the product of a compact manifold $L$ and a compact surface $N$ of genus 
	bigger than one. Then $M$ does not admit any Riemannian metric of positive Ricci curvature.
\end{mycor}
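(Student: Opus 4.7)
The plan is to argue by contradiction using the graphical mean curvature flow to produce an unexpected null-homotopy. Assume therefore that $M = L \times N$ carries a Riemannian metric $\gm$ with $\Ric_M > 0$. By compactness of $M$, there exist constants $\rho > 0$ and $C > 0$ with $\Ric_M \ge \rho$ and $\BR_M \ge -C$. Since $N$ has genus bigger than one, I would equip $N$ with a metric $\gn$ of constant negative curvature $\sigma_N < 0$, and then consider the rescaled family $\gind_r := r^2\gn$, whose sectional curvature equals $r^{-2}\sigma_N$.

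The first step is to verify that, for sufficiently small $r > 0$, the projection $\pi_N : (M,\gm) \to (N,\gind_r)$ falls under Theorem \ref{main}(d) with initial data $f_0 = \pi_N$. The singular values of $\pi_N$ with respect to $\gm$ and $\gind_r$ are obtained from those computed with respect to $\gm$ and $\gn$, which are uniformly bounded by smoothness and compactness, simply by multiplication with $r$; hence $\pi_N$ is strictly area decreasing once $r$ is small. For the curvature hypotheses, $\sup_N \sigma_r = r^{-2}\sigma_N < 0$ gives $\max\{0,\sup_N\sigma_r\} = 0$, so $\Ric_M \ge \rho > 0$ immediately yields \eqref{curvature b} and \eqref{curvature c}, while \eqref{curvature main} reduces to $-C \ge r^{-2}\sigma_N$, i.e.\ $r \le \sqrt{|\sigma_N|/C}$.

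Having set the stage, I would then apply Theorem \ref{main}(d) to the flow starting from $\pi_N$. Part (d)(2)(i) supplies a uniform $C^0$ bound because $\Ric_M > 0$, while part (d)(1) gives uniform $C^k$ bounds for $k \ge 1$; by part (d)(3)(iii), applied with any $x \in M$, the flow converges smoothly to a constant map $c:M\to N$. In particular, $\pi_N$ is smoothly null-homotopic.

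To close the argument, fix $l_0 \in L$ and consider the inclusion $i : N \to M$, $y \mapsto (l_0,y)$. Then $\pi_N \circ i = \Id_N$, so composing the null-homotopy of $\pi_N$ with $i$ produces a null-homotopy of $\Id_N$, contradicting the fact that a closed surface of genus bigger than one is a $K(\pi,1)$ with nontrivial fundamental group. I expect the main obstacle to be the balancing of the scale parameter $r$: the rescaling must be aggressive enough to make $\pi_N$ strictly area decreasing, yet gentle enough to keep the large negative sectional curvature of $\gind_r$ below the (possibly negative) lower bound on $\BR_M$. Compactness of $M$ is what makes this balancing possible.
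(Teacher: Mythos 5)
Your argument is correct and follows essentially the same route as the paper: rescale the hyperbolic metric on $N$ so that $\pi_N$ becomes strictly area decreasing and conditions \eqref{curvature main}, \eqref{curvature b}, \eqref{curvature c} hold, then invoke Theorem \ref{main}(d) with $\Ric_M>0$ to deform $\pi_N$ to a constant map, contradicting the fact that $\pi_N$ is not null-homotopic (which you, unlike the paper, justify explicitly via $\pi_N\circ i=\Id_N$). Note only that no genuine ``balancing'' of $r$ is needed: shrinking $r$ simultaneously shrinks the singular values and makes $\sup_N\sigma_r=r^{-2}\sigma_N$ more negative, so both requirements are met for all sufficiently small $r$.
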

\begin{proof}
	The projection $\pi_N:L\times N\to N$ is not homotopic to a constant map. If $L\times N$ admits a metric of positive Ricci curvature, then we can equip $N$ with a metric of sufficiently negative constant curvature such that $\pi_N$ becomes strictly area decreasing and such that the curvature conditions \eqref{curvature main}, \eqref{curvature b} and \eqref{curvature c} hold. Theorem \ref{main} implies that $\pi_N$ can be deformed into a constant map by mean curvature flow. This is a contradiction.
\end{proof}

We state the classification of the limits in Theorem \ref{main}.
If $\dim N=2$, then $f:M\to N$ has at most two non-trivial singular values
$\lambda\ge\mu$.
\begin{mythm}\label{minimal}
	Let $(M,\gm)$ be a compact Riemannian manifold of dimension $m>1$ and let $(N,\gn)$ be a complete Riemannian surface such that \eqref{curvature main} and \eqref{curvature b} hold, that is we have
	\begin{gather*}
	\BR_M\ge {\sup}_N\sigma_N\quad\text{and}\quad	\Ric_M\ge 0.
	\end{gather*}
	Let $f:M\to N$ be a strictly area decreasing minimal map. Then $f$ is totally geodesic, the rank $\rank(df)$ of $df$ and the singular values $\lambda$ and $\mu$ of $f$ are constant. If $\rank(df)=0$, then $f$ is constant and $\lambda=\mu=0$. Otherwise, $\,\rank(df)>0$ and $f:M\to f(M)$ is a submersion. Each fiber $K_y$,
	$y\in f(M)$, is a compact embedded totally geodesic submanifold that is isometric to a manifold $(K,\gind_K)$ of non-negative Ricci curvature that does not depend on $y$.  The horizontal integral submanifolds are complete totally geodesic submanifolds in $M$ that intersect the fibers orthogonally. $(M,\gm)$ is locally isometric to a product $(L\times K,\gind_L\times \gind_K)$. The Euler characteristic $\chi(M)$ of $M$ vanishes, and, at each $x\in M$, the kernel of the Ricci operator is non-trivial. More precisely:
		\smallskip
		\begin{enumerate}[\normalfont(a)]
			\item $\rank(df)=1$. Then $\mu=0$, $\lambda>0$. Moreover, $\upgamma:=f(M)$ is a closed geodesic in $N$. The horizontal leaves are geodesics orthogonal to the fibers and $f:(M,\gm)\to(\upgamma,\lambda^{-2}\gind_{\upgamma})$ is a Riemannian submersion, where $\gind_{\upgamma}$ denotes the metric on $\upgamma$ as a submanifold in $(N,\gn)$.
			\smallskip
			\item $\rank(df)=2$. Then $\lambda,\mu>0$, $f(M)=N$, and $N$ is diffeomorphic to a torus $\mathbb{T}^2$ or a Klein bottle $\mathbb{T}^2/\mathbb{Z}_2$. The metric $\gn$ and the metrics on the horizontal leaves are flat. Additionally,  $f:(M,\gm)\to(N,\lambda^{-2}\gn)$ is a Riemannian submersion, if $\lambda=\mu$.
		\end{enumerate}
\end{mythm}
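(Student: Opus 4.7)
The plan is to derive an elliptic PDE on the compact minimal graph $\Gamma_f$ for a natural scalar quantity measuring the strict area-decreasing defect, apply the strong maximum principle to force analytic rigidity, and then unfold the geometric and topological consequences.

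\textbf{Step 1 — Elliptic PDE and rigidity of the second fundamental form.} Since $\dim N=2$, the differential $df$ has at most two nonzero singular values $\lambda\ge\mu$, and the strict area-decreasing hypothesis reads $\lambda\mu<1$ on $M$. I would work with a smooth positive function $\eta$ on $M$ whose positivity is equivalent to strict area decrease, namely a smooth regularization of $1-\lambda\mu$ (so as to avoid branching when $\lambda=\mu$). Tracing the Bochner--Simons computation that underlies Theorem \ref{main} and substituting $H\equiv 0$, one should obtain an elliptic inequality of the schematic form
$$\Delta_{\Gamma_f}\eta\;+\;(\text{first-order terms})\;\le\;-\,Q(A)\;-\;C\cdot\eta,$$
where $Q(A)\ge 0$ is a nonnegative quadratic form in the second fundamental form $A$ of $\Gamma_f$, and $C$ is a curvature coefficient isolating the bi-Ricci combination $\BR_M-\sigma_N$ together with a nonnegative term controlled by $\Ric_M$. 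The hypotheses \eqref{curvature main} and \eqref{curvature b} are calibrated so that $C\ge 0$ pointwise. Since $M$ is compact, $\eta$ attains a minimum, and the strong maximum principle forces $\eta$ constant, $Q(A)\equiv 0$, and $C\equiv 0$. An inspection of the kernel of $Q$ yields $A\equiv 0$, so $f$ is totally geodesic and $\Gamma_f\subset M\times N$ is a totally geodesic submanifold; the equality $C\equiv 0$ simultaneously gives constancy of $\lambda$ and $\mu$ on $M$ and nontriviality of $\ker\Ric_M$ at every point.

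\textbf{Step 2 — Parallel splitting and fiber structure.} With $\nabla df\equiv 0$ and constant singular values, both the vertical distribution $V:=\ker df$ and its orthogonal complement $H$ are smooth and parallel; each is integrable with totally geodesic leaves. The local de Rham theorem then provides a local isometric splitting $(M,g_M)\cong(L\times K,g_L\times g_K)$, where $K$ is a fiber and $L$ a horizontal leaf. Fibers are closed submanifolds of the compact $M$, hence compact and mutually isometric; $\Ric_M\ge 0$ descends to $\Ric_K\ge 0$, and the horizontal leaves are complete totally geodesic. The presence of a nonzero parallel vertical section (when $\rank df<m$) or of the flat horizontal factor (when $\rank df=m=2$) forces $\chi(M)=0$ by Hopf's theorem.

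\textbf{Step 3 — Rank case analysis.} If $\rank df=0$, then $f$ is constant. If $\rank df=1$, then $\mu=0$, $\lambda>0$, and $f(M)\subset N$ is a connected $1$-dimensional totally geodesic submanifold; compactness of $M$ together with constancy of $\lambda$ forces $f(M)$ to be a closed geodesic $\upgamma$, and rescaling $g_\upgamma$ by $\lambda^{-2}$ turns $f$ into a Riemannian submersion onto $(\upgamma,\lambda^{-2}g_\upgamma)$. If $\rank df=2$, then $\lambda,\mu>0$ and $f$ is a totally geodesic submersion; surjectivity onto $N$ follows from openness together with properness on compact $M$. The equality $C\equiv 0$ in this regime, combined with the totally geodesic structure of the horizontal leaves, forces $\sigma_N\equiv 0$. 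Hence $N$ is a complete flat surface receiving a surjection from a compact manifold, so $N$ itself is compact and diffeomorphic to $\mathbb{T}^2$ or $\mathbb{T}^2/\mathbb{Z}_2$. When additionally $\lambda=\mu$, one checks $f^{\ast}g_N=\lambda^2 g_M|_H$, so $f$ is a Riemannian submersion into $(N,\lambda^{-2}g_N)$.

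\textbf{Main obstacle.} The crux is Step 1: isolating the exact combination of ambient curvature terms of the graph into the bi-Ricci form $\BR_M-\sigma_N$. In an orthonormal frame adapted to the singular vectors of $df$, one has to regroup two Ricci contributions together with one sectional-curvature contribution into a single bi-Ricci quantity; this is the substantive improvement over the pointwise sectional hypothesis \eqref{oold} used in earlier work and is also what makes the equality analysis delicate, since several nonnegative quantities must be forced to vanish simultaneously. A secondary difficulty is, in the rank-$2$ case, deducing both surjectivity of $f$ and compactness of $N$ from the totally geodesic submersion structure, since $N$ is only assumed complete with bounded geometry.
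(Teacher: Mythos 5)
Your outline follows essentially the same route as the paper: the inequality you posit in Step 1 is precisely the stationary form \eqref{p9} of the evolution equation for $\pind=\operatorname{tr}_{\gind}\sind+2-m$ computed in Lemma \ref{evolpind}, with $Q(A)=2\pind\vert A\vert^2$ and with your coefficient ``$C$'' playing the role of the curvature quantity $\mathcal{Q}$; the subsequent parallel-splitting and rank analysis is also the paper's. The only methodological difference is minor: you invoke the strong maximum principle to make $\eta$ constant, whereas the paper integrates $\Delta\sqrt{\pind}+\sqrt{\pind}\vert A\vert^2\le 0$ over the closed manifold; either way one gets $A\equiv 0$ and the vanishing of the curvature term. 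But you leave the crux (the regrouping of the ambient curvature terms into $\BR_M(\alpha_1,\alpha_2)-\sigma_N$ plus nonnegative Ricci terms) as a schematic claim, and two of your deductions are wrong as stated. The constancy of $\lambda$ and $\mu$ does \emph{not} follow from ``$C\equiv 0$'': vanishing of the curvature term only yields the pointwise identities \eqref{q1}--\eqref{q3}, and constancy of a single scalar such as $\eta$ is one relation between $\lambda$ and $\mu$, not two. The correct source is $A\equiv 0$: by \eqref{gradpind} total geodesy gives $\nabla\sind=0$, hence constant singular values.

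More seriously, your argument for $\chi(M)=0$ fails. A parallel distribution need not admit any nowhere-vanishing section, parallel or otherwise: take $M=\mathbb{S}^2\times\mathbb{T}^2$ with the product metric and $f$ the projection onto a flat torus $N$ scaled so that $f$ is strictly area decreasing; this satisfies \eqref{curvature main} and \eqref{curvature b}, is a totally geodesic minimal map with $\rank(df)=2<m=4$, and every vertical vector field, being tangent to the $\mathbb{S}^2$-fibers, must vanish somewhere. Your alternative case only covers $\rank(df)=m=2$, so even-dimensional $M$ with $m\ge 4$ (and, e.g., $\rank(df)=2$) is left untreated. The paper's argument avoids this entirely: $f(M)$ is $\mathbb{S}^1$, $\mathbb{T}^2$ or the Klein bottle, each of which carries a nowhere-zero vector field $W$, and since $f$ is a submersion with parallel horizontal distribution, $W$ lifts to a nowhere-zero horizontal field on $M$; Poincar\'e--Hopf then gives $\chi(M)=0$. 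With these two repairs, and with the actual computation of Lemma \ref{evolpind} carried out (you flag it as the main obstacle but do not perform it), your proposal matches the published proof; as written, it has genuine gaps at exactly these points.
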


\begin{mycor}\label{cor b}
	If,	 in addition to the assumptions made in Theorem {\normalfont\ref{minimal}}, there exists a point $x\in M$ with $\Ric_M(x)>0$, then strictly area decreasing minimal maps $f:M\to N$ are constant.
\end{mycor}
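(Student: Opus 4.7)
The plan is to derive Corollary~\ref{cor b} directly from the classification of strictly area decreasing minimal maps established in Theorem~\ref{minimal}, by contraposition. I would suppose that $f:M\to N$ is a strictly area decreasing minimal map satisfying the hypotheses of Theorem~\ref{minimal} which is \emph{not} constant, and then aim to show that $\Ric_M(x)$ must fail to be positive definite at every point $x\in M$.

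Since $f$ is non-constant, $\rank(df)>0$, so Theorem~\ref{minimal} places us in case (a) with $\rank(df)=1$ or case (b) with $\rank(df)=2$. In both cases the theorem furnishes a local splitting $(M,\gm)\cong(L\times K,\gind_L\times\gind_K)$ and, crucially, asserts that the kernel of the Ricci operator of $M$ is non-trivial at \emph{every} point $x\in M$. This degeneracy is geometrically natural: the horizontal leaves are complete totally geodesic submanifolds, and either they are geodesics that are mapped (up to scale) isometrically onto a closed geodesic in $N$ (case~(a)), or they carry a flat metric corresponding to the flat target torus or Klein bottle (case~(b)). In either scenario there is, at each point, a nonzero direction along which the Ricci tensor vanishes.

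Combining this with the hypothesis of the corollary gives the contradiction: the existence of a point $x_0\in M$ with $\Ric_M(x_0)>0$ means precisely that the Ricci operator at $x_0$ is positive definite and hence has trivial kernel. This is incompatible with the universal non-triviality of the Ricci kernel asserted by Theorem~\ref{minimal}. Therefore the assumption that $f$ is non-constant is untenable, and $f$ must be a constant map.

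I do not anticipate a genuine obstacle, as the statement is a one-step corollary of the classification. The only item worth double-checking is that the conclusion of Theorem~\ref{minimal} about the Ricci kernel is phrased pointwise (rather than merely generically), so that a single point of strict positivity of $\Ric_M$ suffices to rule out both non-trivial cases; inspection of the statement confirms that this is indeed the case.
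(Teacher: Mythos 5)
Your argument is exactly the paper's own proof: invoke the conclusion of Theorem \ref{minimal} that in the case $\rank(df)>0$ the Ricci operator has non-trivial kernel at every point, and note that this is incompatible with $\Ric_M(x)>0$ at even a single point, forcing $\rank(df)=0$ and hence $f$ constant. The proposal is correct and takes the same one-step route as the paper.
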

\begin{proof}
	If $\Ric_M(x)>0$ at some point $x\in M$, then $\rank(df)>0$ in Theorem F is impossible, since this part requires the kernel of the Ricci operator to be non-trivial at each point.
\end{proof}

\section{Geometry of graphs}\label{graphs}
\noindent In this section, we follow the notations of our previous papers \cite{ss0,ss1, ss2} and recall some basic 
facts related to the geometry of graphical submanifolds.
\smallskip
\subsection{Fundamental forms and connections}~ 

	The product $M\times N$ will be regarded as a Riemannian manifold equipped with the metric
	$\gk=\langle\cdot\,,\cdot\rangle:=\gm\times \gn.$
 The graph $\varGamma_f$ is parametrized by $F:=\operatorname{Id}_{M}\times f$, where $\operatorname{Id}_{M}$ is the identity map of $M$. The metric on $M$ induced by $F$ will be denoted by
	$\gind:=F^*\gk$ and will be called the {\em graphical metric}. The Levi-Civita connection of $\gind$ is denoted by $\nabla$, the curvature tensor by $\rind$ and the Ricci curvature by $\Ric$.

	Denote by $\pi_{M}:M\times N\to M$ and $\pi_{N}:M\times N\to N$ the two natural projections. The metric tensors $\gm,\gn,\gk$ and $\gind$ are related by
		$$\gk:=\pi_M^*\gm+\pi_N^*\gn\quad\text{and}\quad\gind:=F^*\gk=\gm+f^*\gn.$$
	As in \cite{ss0,ss1, ss2}, let us define the symmetric $2$-tensors
	\begin{equation*}
		\sk:=\pi_M^*\gm-\pi_{N}^*\gn\quad\text{and}\quad
		\sind:=F^*\sk=\gm-f^*\gn\,.\label{met4}
	\end{equation*}
	The \textit{second fundamental form} of $F$ is denoted by the letter $A$. In terms of the connections $\nabla^F$
	and $\nabla^f$ of the pull-back bundles $F^*T(M\times N)$ and $f^*TN$, respectivley, we have
	\begin{eqnarray}
	A(v,w)&=&\nabla_v^F\bigl(dF(w)\bigr)-dF(\nabla_vw)\nonumber\\
	&=&\left(\nabla_v^{\gm}w-\nabla_vw,\nabla_v^f\bigl(df(w)\bigr)-df(\nabla_vw)\right),\label{sec formula}
	\end{eqnarray}
	where $v,w$ are arbitrary smooth vector fields on $M$. In the sequel, we will denote all full connections on bundles over $M$ which are induced by the Levi-Civita connection of $\gk$ via $F:M\to M\times N$ by the same letter $\nabla$.
	
	If $\xi$ is a normal vector of the graph, then the symmetric bilinear form $A^{\xi}$, given by
	$$A^{\xi}(v,w):=\langle A(v,w),\xi\rangle,$$
	will be called the \textit{second fundamental form with respect to the normal $\xi$}. The \textit{mean curvature vector field} of the graph $\varGamma_f$ is the trace of $A$ with respect to the graphical metric $\gind$, that is
	$$H:={\trace}_{\gind}A$$
	and $H$ is a section in the normal bundle $T^\perp M$. The graph $\varGamma_f$, and likewise the map $f$, are called \textit{minimal} if $H$ vanishes identically.

	Throughout this paper, we will use latin indices to indicate components of tensors with respect to frames in the tangent bundle that are orthonormal with respect to $\gind$. For example, if $\{e_1,\dots,e_m\}$ is a local orthonormal frame of the tangent bundle and $\xi$ is a local vector field in the normal bundle of $M$, then
	$$A_{ij}=A(e_i,e_j)\quad\text{and}\quad A^{\xi}_{ij}=\langle A(e_i,e_j),\xi\rangle.$$

\subsection{Singular value decomposition of maps in codimension two}\label{frames}~

	Fix a point $x\in M$ and let $\lambda^2_{1}\ge\displaystyle{\dots}\ge\lambda^2_{m}$ denote the eigenvalues of $f^{*}\gn$ at $x$ with respect to $\gm$. The corresponding values $\lambda_i\ge  0$, $i\in\{1,\dots,m\}$,
	are the singular values of the differential $\df$ of $f$ at the point $x$. The singular values are Lipschitz continuous functions on $M$.

	Suppose that $M$ has dimension $m>1$ and that $N$ is a Riemannian surface. In this case, there exist at most two non-vanishing singular values, which we denote for simplicity by $\lambda:=\lambda_1$ and $\mu:=\lambda_2.$ At each fixed point $x\in M$, one may consider an orthonormal basis $\{\alpha_{1},\dots,\alpha_{m}\}$ of $T_xM$ with respect to $\gm$ that diagonalizes $f^*\gn$. Therefore, at $x$ we have 
	$$\left(f^*\gn(\alpha_i,\alpha_j)\right)_{i,j}=\operatorname{diag}\bigl(\lambda^2,\mu^2,0,\dots,0\bigr).$$
	In addition, at $f(x)$ we may consider an orthonormal basis $\{\beta_{1},\beta_{2}\}$ with respect to $\gn$ such that
	$$\df(\alpha_{1})=\lambda \beta_{1},\quad \df(\alpha_{2})=\mu \beta_{2}\quad\text{and}\quad\df(\alpha_{i})=0, \,\,\text{for }i\ge 3.$$
We then define another basis $\{e_1,\dots,e_m\}$ of $T_xM$ and a basis $\{\xi,\eta\}$ of $T_x^\perp M$ in terms of the singular values, namely
	\begin{equation*}
	e_1:=\frac{\alpha_1}{\sqrt{1+\lambda^2}}, \quad e_2:=\frac{\alpha_2}{\sqrt{1+\mu^2}},\quad e_{i}:=\alpha_i,\,\,\text{for }i\ge 3,\label{tangent}
	\end{equation*}
	and 
	\begin{equation*}
	\xi:=\frac{-\lambda\alpha_1\oplus\beta_1}{\sqrt{1+\lambda^2}},\quad
	\eta:=\frac{-\mu\alpha_2\oplus\beta_2}{\sqrt{1+\mu^2}}.\label{normal}
	\end{equation*}
	The frame $\{e_1,\dots,e_m\}$ defines an orthonormal basis of $T_xM$ with respect to the induced graphical metric $\gind$, and $\{\xi,\eta\}$ forms an orthonormal basis of $T_x^\perp M$ at $F(x)$. The pull-back $\sind=F^*\sk$ to $TM$ satisfies
	\begin{equation}\label{sind}
	\left(\sind(e_{i},e_{j})\right)_{i,j}=\operatorname{diag}\left(\frac{1-\lambda^{2}}{1+\lambda^{2}},\frac{1-\mu^{2}}{1+\mu^{2}},1,\dots,1\right).
	\end{equation}
	The restriction $\sind^\perp$ of $\sk$ to the normal bundle of $\varGamma_f$ satisfies the identities
	\begin{eqnarray}
	\sind^{\perp}(\xi,\xi)=-\frac{1-\lambda^{2}}{1+\lambda^{2}},\,\, \sind^{\perp}(\eta,\eta)=-\frac{1-\mu^{2}}{1+\mu^{2}}\,\,\text{and}\,\, \sind^{\perp}(\xi,\eta)=0.\label{normal2}
	\end{eqnarray}
Define the quantities $\tind_{11}=\sk(\dF(e_{1}),\xi)$ and $\tind_{22}=\sk(\dF(e_{2}),\eta)$,
which represent the mixed terms of $\sk$.
Note that
\begin{equation}\label{mixed1}
\tind_{11}:=-\frac{2\lambda}{1+\lambda^{2}}\quad\text{and}\quad
\tind_{22}:=-\frac{2\mu}{1+\mu^{2}}.
\end{equation}
A map $f$ is strictly area decreasing if ${\lambda\mu}<1$. Consider $\pind:M\to\real{}$ given by
	$$\pind:=\operatorname{tr}_{\gind}\sind+2-m=\sind_{11}+\sind_{22}=\frac{2(1-\lambda^2\mu^2)}{(1+\lambda^2)(1+\mu^2)}.$$
	In codimension two, the map $f$ is strictly area decreasing if and only if $\pind>0$.

\section{Estimates for the graphical mean curvature flow}\label{estimatesmeancurvature}
	\noindent Let $f:M\to N$ be a smooth map between two Riemannian manifolds and let
	$F_0:=\operatorname{Id}_M\times f:M\to\varGamma_f\subset M\times N.$
	We deform the graph $\varGamma_f$ by the mean curvature flow in $M\times N$, that is we consider the family of immersions $F:M\times [0,T)\to M\times N$ satisfying the evolution equation
	\begin{gather}\label{mcf}
	\frac{dF}{dt}(x,t)=H(x,t),\quad
	F(x,0)=F_0(x).\tag{MCF}
	\end{gather}
	where $(x,t)\in M\times[0,T)$, $H(x,t)$ is the mean curvature vector field at $x\in M$ of $F_t :M\to M\times N$, $F_t(\,\cdot\,) := F(\,\cdot\,,t)$, and where $T$ denotes the maximal time of existence of a smooth solution of \eqref{mcf}.
	\smallskip

\subsection{First order estimates for area decreasing maps}~

	\noindent To investigate under which conditions the area decreasing property is preserved under the flow, we compute the evolution equation of the function
	$\pind=\operatorname{tr}_{\gind}\sind+2-m.$
	We have the following result for graphs in codimension 2.
	\begin{lemma}\label{evolpind}
	The function $\pind$ satisfies the evolution equation
	\begin{eqnarray}\label{p8}
	\big(\nabla_{\dt}-\Delta\big)\pind&\!\!\!\!=\!\!\!\!&2\pind\vert A\vert^2+2\sum_{k=1, i=3}^m\vert A^\xi_{ki}\vert ^2(1-\sind_{11})+2\sum_{k=1, i=3}^m\vert A^\eta_{ki}\vert ^2(1-\sind_{22})\nonumber\\
	&&+\frac{1}{2\pind}\Big(4\sum_{k=1}^m\vert A^\xi_{1k}\tind_{22}+A^\eta_{2k}\tind_{11}\vert^2-\vert\nabla\pind\vert^2\Big)+\mathcal{Q},
	\end{eqnarray}
	where $\mathcal{Q}$ is the first order quantity given by
	\begin{eqnarray}
	\mathcal{Q}&:=&\frac{2\lambda^2\mu^2(2+\pind)}{(1+\lambda^2)(1+\mu^2)} 
	\big(\BR_M(\alpha_1,\alpha_2)-\sigma_N\big)\\
	&&+\frac{2\lambda^2\pind}{(1+\lambda^2)(1+\mu^2)}\operatorname{Ric}_M(\alpha_1,\alpha_1)+\frac{2\mu^2\pind}{(1+\lambda^2)(1+\mu^2)}\operatorname{Ric}_M(\alpha_2,\alpha_2)\nonumber,\label{p8a}
	\end{eqnarray}
	and $\{\alpha_1,\dots,\alpha_m\}$, $\{e_1,\dots,e_m\}$, $\{\xi,\eta\}$ are the special bases arising from the singular value decomposition defined in subsection \ref{frames}.
	\end{lemma}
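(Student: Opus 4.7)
The strategy is to compute the evolution of the pulled-back symmetric tensor $\sind=F^*\sk$ under the mean curvature flow, then take the trace with respect to the graphical metric $\gind$ and carefully regroup the resulting terms using the singular value decomposition frames.

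First I would derive the evolution equation for $\sind$ itself. Since $\sk$ is parallel on the ambient product $(M\times N,\gk)$, the standard calculation (appearing already in the authors' earlier papers) yields an identity for $(\nabla_{\partial_t}-\Delta)\sind_{ij}$ consisting of three pieces: terms linear in $\sind$ contracted with the intrinsic Ricci tensor of $\gind$; a quadratic $A$-term that arises from contracting $A$ twice with the normal part $\sind^\perp$; and the pullback to $M$ of the ambient Riemann tensor of $M\times N$ paired with $\sk$. Taking the trace against $\gind$ and using the Gauss equation to rewrite the intrinsic Ricci tensor in terms of ambient curvature and $A$ yields an equation for $\pind=\operatorname{tr}_\gind\sind+2-m$ with a reaction part quadratic in $A$, a gradient part, and a pure curvature part.

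Next I would evaluate everything in the SVD frames $\{e_1,\dots,e_m\}$ and $\{\xi,\eta\}$ of Subsection \ref{frames}. In these frames $\sind$ is diagonal with the entries in \eqref{sind}, $\sind^\perp$ is diagonal with the entries in \eqref{normal2}, and the mixed terms of $\sk$ reduce to $\tind_{11},\tind_{22}$ as in \eqref{mixed1}. Since $\sind_{ii}=1$ for $i\geq 3$ while $\sind_{11}+\sind_{22}=\pind$, the quadratic $A$-terms split cleanly into the leading reaction $2\pind|A|^2$ plus the two sums $2(1-\sind_{11})\sum_{k,i\geq 3}|A^\xi_{ki}|^2$ and $2(1-\sind_{22})\sum_{k,i\geq 3}|A^\eta_{ki}|^2$. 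The remaining cross-contributions, involving only $A^\xi_{1k}$ and $A^\eta_{2k}$, produce the inner expression $4\sum_{k}|A^\xi_{1k}\tind_{22}+A^\eta_{2k}\tind_{11}|^2$; this combines with $|\nabla\pind|^2$ through the parallelism identity
\begin{equation*}
\nabla_k\sind_{11}=2A^\xi_{k1}\tind_{11},\qquad \nabla_k\sind_{22}=2A^\eta_{k2}\tind_{22},
\end{equation*}
which follows from $\nabla\sk=0$ together with the vanishing of the remaining mixed pairings of $\sk$ in the SVD frame. Dividing by $2\pind$ (positive by the strict area decreasing assumption) yields the bracketed term $\frac{1}{2\pind}(\cdots)$ in \eqref{p8}.

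The main obstacle is identifying the curvature term $\mathcal{Q}$. Once the ambient Riemann tensor is split into its $M$-part and $N$-part and evaluated on $\sk$ in the SVD frame, one obtains a weighted combination of sectional curvatures $\sigma_M(\alpha_i\wedge\alpha_j)$ for $1\leq i<j\leq m$ together with $\sigma_N(\beta_1\wedge\beta_2)$, with weights built from $\lambda^2/(1+\lambda^2)$, $\mu^2/(1+\mu^2)$ and $\lambda^2\mu^2/[(1+\lambda^2)(1+\mu^2)]$. The algebraic task is to reorganize the sums $\sum_{k\geq 3}\sigma_M(\alpha_1\wedge\alpha_k)$ and $\sum_{k\geq 3}\sigma_M(\alpha_2\wedge\alpha_k)$ using $\Ric_M(\alpha_i,\alpha_i)=\sum_{j\ne i}\sigma_M(\alpha_i\wedge\alpha_j)$, so that the combination $\Ric_M(\alpha_1,\alpha_1)+\Ric_M(\alpha_2,\alpha_2)-\sigma_M(\alpha_1\wedge\alpha_2)=\BR_M(\alpha_1,\alpha_2)$ emerges, multiplied precisely by $2\lambda^2\mu^2(2+\pind)/[(1+\lambda^2)(1+\mu^2)]$, with the leftover Ricci pieces collecting into the two remaining terms of $\mathcal{Q}$. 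It is exactly this repackaging that forces the bi-Ricci curvature of $M$ to appear, and clarifies why the curvature condition \eqref{curvature main} is the natural hypothesis for preserving the strict area decreasing property.
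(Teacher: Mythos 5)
Your proposal follows essentially the same route as the paper: evolve $\sind$ using the parallelism of $\sk$, trace against $\gind$, rewrite the intrinsic Ricci term via the Gauss equation, evaluate the quadratic $A$-terms and the gradient term in the SVD frames (using $(\nabla_{e_k}\sind)(v,v)=2\sk(A(e_k,v),\dF(v))$ and $\sind_{ll}^2+\tind_{ll}^2=1$), and regroup the ambient curvature contribution so that $\BR_M(\alpha_1,\alpha_2)-\sigma_N$ appears with the stated weight and the leftover Ricci pieces form the rest of $\mathcal{Q}$. This matches the paper's proof step for step, so no further comment is needed.
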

\begin{proof}
	To derive the evolution equation of $\pind$, we use the evolution equations of $\gind$ and $\sind$ that were derived in \cite{ss2}. Recall that
	\begin{equation}\label{evolgind}
		\bigl(\nabla_{\dt}\gind\big)(v,w)=-2A^H(v,w),
	\end{equation}
	and
	\begin{eqnarray}\label{evolsind}
		&&\big(\nabla_{\dt}\sind\big)(v,w)=\big(\Delta\sind\big)(v,w)-\sind(\operatorname{Ric}v,w)-
		\sind(\operatorname{Ric}w,v)\\
		&&\quad\quad\quad-2\sum_{k=1}^m\sind^\perp(A(e_k,v),A(e_k,w))+2\sum_{k=1}^m\big(\rm-f_{t}^*\rn\big)(e_k,v,e_k,w),\nonumber
	\end{eqnarray}
	for any $v,w\in TM$. Combining \eqref{evolsind} with the trace of the Gau{\ss} equation
	\begin{eqnarray*}
	\operatorname{Ric}(v,w)&=&\sum_{k=1}^m(\rm+f^*\rn)(e_k,v,e_k,w)\\
	&&+\sum_{k=1}^m\langle A(e_k,e_k),A(v,w)\rangle-\sum_{k=1}^m\langle A(v,e_k),A(w,e_k)\rangle
	\end{eqnarray*}
	we obtain
	\begin{eqnarray}
	\big(\nabla_{\dt}-\Delta\big)\pind&=&2\sum_{k,l=1}^m\big(\rm-f_{t}^*\rn\big)_{klkl}
		-2\sum_{k,l=1}^m\big(\rm+f_{t}^*\rn\big)_{klkl}\sind_{ll}\nonumber\\
	&&+2\left(\phantom{\sum\hspace{-18pt}}\right.\underbrace{\sum_{k,l=1}^m\langle A_{kl},A_{kl}\rangle\sind_{ll}
	-\sum_{k,l=1}^m\sind^\perp(A_{kl},A_{kl})}_{=:\mathcal{A}}\left.\phantom{\sum\hspace{-20pt}}\right).\label{p1}
	\end{eqnarray}
	In codimension two we are able to simplify this equation further. We start with the terms on the right hand side of the second line in \eqref{p1}. Since
	$$\sind_{ll}=1,\,\text{for}\, l\ge 3,\, \sind^\perp(\xi,\xi)=-\sind(e_1,e_1),\, \sind^\perp(\eta,\eta)=-\sind(e_2,e_2),\,\sind^\perp(\xi,\eta)=0,$$
	we get
	\begin{eqnarray*}
	\mathcal{A}&=&\sum_{k,l=1}^m\langle A_{kl},A_{kl}\rangle\sind_{ll}
	-\sum_{k,l=1}^m\sind^\perp(A_{kl},A_{kl})\\
	&=&\sum_{k=1}^m\vert A_{k1}\vert ^2\sind_{11}+\sum_{k=1}^m\vert A_{k2}\vert ^2\sind_{22}+\sum_{k=1, i=3}^m\vert A_{ki}\vert ^2+\vert A^\xi\vert^2\sind_{11}+\vert A^\eta\vert^2\sind_{22}\\
	&=&\pind\vert A\vert^2+\sum_{k=1, i=3}^m\big(\vert A^\xi_{ki}\vert ^2+\vert A^\eta_{ki}\vert ^2\big)\\
	&&+\Big(\sum_{k=1}^m\vert A_{k1}\vert ^2-\vert A^\eta\vert^2\Big)\sind_{11}+\Big(\sum_{k=1}^m\vert A_{k2}\vert ^2-\vert A^\xi\vert^2\Big)\sind_{22}.
	\end{eqnarray*}
	On the other hand 
	$$
	\sum_{k=1}^m\vert A_{k1}\vert ^2-\vert A^\eta\vert^2=\sum_{k=1}^m\big(\vert A^\xi_{k1}\vert^2-\vert A^\eta_{k2}\vert^2\big)-\sum_{k=1, i=3}^m\vert A^\eta_{ki}\vert ^2
	$$
	and
	$$\sum_{k=1}^m\vert A_{k2}\vert ^2-\vert A^\xi\vert^2=\sum_{k=1}^m\big(\vert 
	A^\eta_{k2}\vert^2-\vert A^\xi_{k1}\vert^2\big)-\sum_{k=1, i=3}^m\vert A^\xi_{ki}\vert ^2.$$
	Consequently,
	\begin{eqnarray}
	\mathcal{A}&=&\pind\vert A\vert^2
	+\sum_{k=1, i=3}^m\vert A^\xi_{ki}\vert ^2(1-\sind_{11})+\sum_{k=1, i=3}^m\vert A^\eta_{ki}\vert ^2(1-\sind_{22})\nonumber\\
	&&\quad\quad\,\,\,\,-\underbrace{\sum_{k=1}^m\big(\vert A^\xi_{1k}\vert^2-\vert A^\eta_{2k}\vert^2\big)(\sind_{22}-\sind_{11})}_{=:\mathcal{B}}.\label{p3}
	\end{eqnarray}
	We want to express $\mathcal{B}$ in terms of $\vert\nabla \pind\vert^2$, therefore we need a different expression for $\vert\nabla \pind\vert^2$. Since
	\begin{equation}\label{gradpind}
	(\nabla_{e_k}\sind)(v,v)=2\sk(A(e_k,v),\dF(v))
	\end{equation}
	we get
	\begin{eqnarray*}
	\nabla_{e_k}\pind
	&=&2\sum_{i=1}^m\sk(A(e_k,e_i),\dF(e_i))\\
	&=&2\sum_{i=1}^m\sk(\xi,\dF(e_i))A^\xi_{ik}+2\sum_{i=1}^m\sk(\eta,\dF(e_i))A^\eta_{ik},
	\end{eqnarray*}
	from where we deduce that
	\begin{equation*}
	\nabla_{e_k}\pind=2A^\xi_{1k}\tind_{11}+2A^\eta_{2k}\tind_{22}.
	\end{equation*}
	Recalling that $\sind_{ll}^2+\tind_{ll}^2=1$, for $l\in\{1,2\}$, we obtain for
	$\vert\nabla\pind\vert^2$ the following
	\begin{eqnarray}
	&&\hspace{-20pt}4\pind \mathcal{B}\!=\!4\pind\!\sum_{k=1}^m\big(\vert A^\xi_{1k}\vert^2\!-\!\vert A^\eta_{2k}\vert^2\big)(\sind_{22}-\sind_{11})
	=4\!\sum_{k=1}^m\big(\vert A^\xi_{1k}\vert^2\!-\!\vert A^\eta_{2k}\vert^2\big)(\sind_{22}^2-\sind_{11}^2)\nonumber\\
	&&\,\,\,=\vert\nabla\pind\vert^2-4\sum_{k=1}^m\vert A^\xi_{1k}\tind_{22}+A^\eta_{2k}\tind_{11}\vert^2\label{p4}.
	\end{eqnarray}
	Combining \eqref{p1}--\eqref{p4}, we derive that at points where $\pind>0$ it holds
	\begin{eqnarray}
	\big(\nabla_{\dt}-\Delta\big)\pind&\!\!\!=\!\!\!&2\pind\vert A\vert^2+2\!\!\!\sum_{k=1, i=3}^m\vert A^\xi_{ki}\vert ^2(1-\sind_{11})+2\!\!\!\sum_{k=1, i=3}^m\vert A^\eta_{ki}\vert ^2(1-\sind_{22})\nonumber\\
	&&+\frac{1}{2\pind}\Big(4\sum_{k=1}^m\vert A^\xi_{1k}\tind_{22}+A^\eta_{2k}\tind_{11}\vert^2-\vert\nabla\pind\vert^2\Big)\nonumber\\
	&&+\underbrace{2\sum_{k,l=1}^m(\rm)_{klkl}(1-\sind_{ll})}_{=:\mathcal{C}_1}-\underbrace{2\sum_{k,l=1}^m(f_{t}^*\rn)_{klkl}(1+\sind_{ll})}_{=:\mathcal{C}_2}.\label{p5}
	\end{eqnarray}
	Since $\sind_{ll}=1$ and $\lambda_l=0$ for $l\ge 3$, the first term $\mathcal{C}_1$ in the last line of \eqref{p5} simplifies to
	\begin{eqnarray*}
	\mathcal{C}_1&=&2\sum_{k,l=1}^m(\rm)_{klkl}(1-\sind_{ll})
	=2\sum_{k,l=1}^m\rm(e_k,e_l,e_k,e_l)(1-\sind(e_l,e_l))\\
	&=&\frac{4\lambda^2}{1+\lambda^2}\sum_{k=1}^m\rm(e_k,e_1,e_k,e_1)+\frac{4\mu^2}{1+\mu^2}\sum_{k=1}^m\rm(e_k,e_2,e_k,e_2)\\
	&=&\frac{4\lambda^2}{(1+\lambda^2)^2}\sum_{k=1}^m\Big(1-\frac{\lambda_k^2}{1+\lambda_k^2}\Big)\sigma_M(\alpha_1\wedge\alpha_k)\\
	&&+\frac{4\mu^2}{(1+\mu^2)^2}\sum_{k=1}^m\Big(1-\frac{\lambda_k^2}{1+\lambda_k^2}\Big)\sigma_M(\alpha_2\wedge\alpha_k)\\
	&=&\frac{4\lambda^2}{(1+\lambda^2)^2}\operatorname{Ric}_M(\alpha_1,\alpha_1)+\frac{4\mu^2}{(1+\mu^2)^2}\operatorname{Ric}_M(\alpha_2,\alpha_2)\\
	&&-\frac{4\lambda^2\mu^2}{(1+\lambda^2)(1+\mu^2)}\left(\frac{1}{1+\lambda^2}+\frac{1}{1+\mu^2}\right)\sigma_M(\alpha_1\wedge\alpha_2)\\
	\end{eqnarray*}
	Hence
	\begin{eqnarray*}
		\mathcal{C}_1&=&\frac{4\lambda^2}{(1+\lambda^2)^2}\operatorname{Ric}_M(\alpha_1,\alpha_1)+\frac{4\mu^2}{(1+\mu^2)^2}\operatorname{Ric}_M(\alpha_2,\alpha_2)\\
	&&-\frac{2\lambda^2\mu^2(2+\pind)}{(1+\lambda^2)(1+\mu^2)}\,\sigma_M(\alpha_1\wedge\alpha_2).
	\end{eqnarray*}
	By our choice of the local frames, the last term $\mathcal{C}_2$ in \eqref{p5} is given by
	\begin{eqnarray*}
		\mathcal{C}_2&=&2\sum_{k,l=1}^m(f_{t}^*\rn)_{klkl}(1+\sind_{ll})\\
		&=&2(2+\pind)\rn(\df(e_1),\df(e_2),\df(e_1),\df(e_2))=\frac{2\lambda^2\mu^2(2+\pind)}{(1+\lambda^2)(1+\mu^2)}\,\sigma_N.
	\end{eqnarray*}
Thus, $\mathcal{Q}:=\mathcal{C}_1-\mathcal{C}_2$ in \eqref{p5} can be written as
\begin{eqnarray*}
\mathcal{Q}&\!\!\!=\!\!\!&-\frac{2\lambda^2\mu^2(2+\pind)}{(1+\lambda^2)(1+\mu^2)}\,\Bigl(\sigma_M(\alpha_1\wedge\alpha_2)+\sigma_N\Bigr)\\
&&\!\!\!+\frac{4\lambda^2}{(1+\lambda^2)^2}\operatorname{Ric}_M(\alpha_1,\alpha_1)+\frac{4\mu^2}{(1+\mu^2)^2}\operatorname{Ric}_M(\alpha_2,\alpha_2)\\
&\!\!\!=\!\!\!&\frac{2\lambda^2\mu^2(2+\pind)}{(1+\lambda^2)(1+\mu^2)}\,\Bigl(\operatorname{Ric}_M(\alpha_1,\alpha_1)+\operatorname{Ric}_M(\alpha_2,\alpha_2)-\sigma_M(\alpha_1\wedge\alpha_2)-\sigma_N\Bigr)\\
&&\!\!\!+\frac{2\lambda^2\pind}{(1+\lambda^2)(1+\mu^2)}\operatorname{Ric}_M(\alpha_1,\alpha_1)+\frac{2\mu^2\pind}{(1+\lambda^2)(1+\mu^2)}\operatorname{Ric}_M(\alpha_2,\alpha_2)
\end{eqnarray*}
which by definition of the bi-Ricci curvature and by combining with \eqref{p5}  implies the evolution equation \eqref{p8} for $\pind$.
\end{proof}
\begin{lemma}\label{main lemma}
	Let $M$ be a compact Riemannian manifold $M$ of dimension $m>1$ and $N$ be a complete Riemannian surface of bounded geometry.  Suppose that they satisfy the main curvature assumption
	\eqref{curvature main} on the bi-Ricci curvature. Let $[0,T)$ denote the maximal time interval on which the smooth solution of the mean curvature flow $\{F_t\}_{t\in[0,T)}:M\to M\times N$ exists, with the initial condition given by $F_0=\operatorname{Id}_M\times f_0$, and where $f_0:M\to N$ is a strictly area decreasing map. Then the following hold:
	\begin{enumerate}[\normalfont (a)]
		\item The flow remains graphical for all $t\in[0,T)$.
		\smallskip
		\item There exist constants $c_0,c_1>0$ depending on $f_0$ such that
		\begin{equation}
		\pind\ge\frac{2c_0e^{\varepsilon_0t}}{\sqrt{1+c_0^2e^{2\varepsilon_0t}}}\quad\text { and }\quad\vert\df_t\vert^2_{\gm}\le c_1e^{-\varepsilon_0t},\label{est p}
		\end{equation}
		where $f_t:M\to N$ are the smooth maps induced by $F_t$ and where the constant $\epsilon_0$ is defined by
		$$
		\varepsilon_0:=
		\begin{cases}
		\frac{1}{4}\min_M\Ric_M, &\text{if}\quad \min_M\Ric_M\ge 0,\\
		\frac{1}{2}\min_M\Ric_M,  &\text{if}\quad \min_M\Ric_M<0.
		\end{cases}
		$$
	\end{enumerate}
	In particular, if $\,\Ric_M\ge 0$ or if $T<\infty$, then the smooth family $\{f_t\}_{t\in[0,\infty)}$ remains uniformly strictly area decreasing and uniformly bounded in $C^1(M,N)$ for all $t\in[0,T)$. 
\end{lemma}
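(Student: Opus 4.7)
My approach is to apply a maximum principle argument to $\pind$ using the evolution equation \eqref{p8} from Lemma \ref{evolpind}, followed by an explicit ODE comparison.

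The first step is to estimate the zeroth-order quantity $\mathcal{Q}$ from below. Since $\alpha_1,\alpha_2$ are orthonormal in $(M,\gm)$, the curvature condition \eqref{curvature main} yields $\BR_M(\alpha_1,\alpha_2)-\sigma_N\ge 0$, so the leading summand of $\mathcal{Q}$ is non-negative. For the remaining Ricci contributions I would bound $\Ric_M(\alpha_i,\alpha_i)$ below by $\min_M\Ric_M$ to obtain
\[
\mathcal{Q}\ \ge\ \min_M\Ric_M\cdot\frac{2(\lambda^2+\mu^2)\,\pind}{(1+\lambda^2)(1+\mu^2)}.
\]
Using the explicit formula for $\pind$ together with $\lambda\mu\le 1$ (so that $2\lambda^2\mu^2\le\lambda^2+\mu^2$ via $\lambda^2+\mu^2\ge 2\lambda\mu\ge 2\lambda^2\mu^2$), a short computation gives the two-sided bound
\[
\frac{2-\pind}{2}\ \le\ \frac{2(\lambda^2+\mu^2)}{(1+\lambda^2)(1+\mu^2)}\ \le\ 2-\pind.
\]
Choosing the lower estimate when $\min_M\Ric_M\ge 0$ and the upper one when $\min_M\Ric_M<0$ then produces the uniform lower bound $\mathcal{Q}\ge 2\varepsilon_0\,\pind(2-\pind)$, with $\varepsilon_0$ defined exactly as in the statement.

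Next, I would set $\phi(t):=\min_M\pind(\cdot,t)$ on the graphical interval and apply Hamilton's trick. At a spacetime point where the spatial minimum is attained with $\pind>0$, one has $\nabla\pind=0$ and $\Delta\pind\ge 0$, so every curvature-free summand on the right-hand side of \eqref{p8}---namely $2\pind|A|^2$, the two $(1-\sind_{ii})$ contributions, and the remaining non-negative bracket $\frac{2}{\pind}\sum_k|A^\xi_{1k}\tind_{22}+A^\eta_{2k}\tind_{11}|^2$---is non-negative. Combined with the estimate on $\mathcal{Q}$, this yields
\[
\dot\phi\ \ge\ 2\varepsilon_0\,\phi(2-\phi)
\]
in the viscosity sense. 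Since $\pind_0>0$ on the compact manifold $M$, I can pick $c_0>0$ with $\tfrac{2c_0}{\sqrt{1+c_0^2}}\le\phi(0)$. A direct differentiation shows that the explicit function $u(t)=\tfrac{2c_0\,e^{\varepsilon_0 t}}{\sqrt{1+c_0^2\,e^{2\varepsilon_0 t}}}$ satisfies $\dot u=\tfrac{\varepsilon_0 u(4-u^2)}{4}\le 2\varepsilon_0 u(2-u)$ (the inequality because $u<2$), and standard ODE comparison then delivers $\phi\ge u$. The differential bound is then immediate: $(1+\lambda^2)(1+\mu^2)\ge 1+\lambda^2+\mu^2$ combined with $\pind(1+\lambda^2)(1+\mu^2)=2(1-\lambda^2\mu^2)\le 2$ gives $|\df_t|_{\gm}^2=\lambda^2+\mu^2\le \tfrac{2-\pind}{\pind}\le\tfrac{2-u(t)}{u(t)}$, and the identity $\sqrt{1+s^2}-s=(\sqrt{1+s^2}+s)^{-1}$ applied with $s=c_0\,e^{\varepsilon_0 t}$ bounds the latter by $c_1\,e^{-\varepsilon_0 t}$ for a suitable $c_1=c_1(c_0)$.

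Part (a) is then a consequence of part (b): the flow ceases to be graphical at some time $t$ precisely when a tangent vector to $F_t(M)$ becomes vertical, which in the singular value decomposition of Section \ref{frames} corresponds to some singular value of $\df_t$ becoming infinite. The $C^1$ bound from (b) is uniform on every closed subinterval of $[0,T)$ and hence excludes this by a standard continuation argument, forcing the maximal graphical time to coincide with the maximal smooth existence time $T$. The uniform statements in the last sentence of the lemma follow at once from \eqref{est p}: if $\Ric_M\ge 0$ then $\varepsilon_0\ge 0$, so $\pind$ is bounded below by a positive constant and $|\df_t|_{\gm}$ decays; if $T<\infty$, both $e^{\varepsilon_0 t}$ and $e^{-\varepsilon_0 t}$ are bounded on $[0,T)$. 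The main technical hurdle I anticipate is the careful algebraic extraction of the bound $\mathcal{Q}\ge 2\varepsilon_0\pind(2-\pind)$ in a form whose single constant $\varepsilon_0$ covers both signs of $\min_M\Ric_M$, together with the verification that the explicit $u$ is a subsolution of the comparison ODE $\dot v=2\varepsilon_0 v(2-v)$ so that the ODE comparison yields the advertised expression.
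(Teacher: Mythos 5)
Your plan follows the paper's proof essentially step for step: the same evolution equation \eqref{p8}, the same estimate of $\mathcal{Q}$ from \eqref{curvature main} plus a lower bound on $\Ric_M$ by $\min_M\Ric_M$ combined with an elementary two-sided inequality (the paper uses $1-\tfrac{\pind^2}{4}\le \tfrac{2(\lambda^2+\mu^2)}{(1+\lambda^2)(1+\mu^2)}\le 2\bigl(1-\tfrac{\pind^2}{4}\bigr)$, you use the equivalent bound with $2-\pind$), the same maximum-principle/ODE comparison with the explicit function $u(t)=\tfrac{2c_0e^{\varepsilon_0 t}}{\sqrt{1+c_0^2e^{2\varepsilon_0 t}}}$, the same extraction of the $C^1$ bound from the lower bound on $\pind$, and the same continuation argument ("graphical as long as $C^1$-bounded") for part (a). All of these steps check out for $\varepsilon_0\ge 0$.

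The one step that fails as written is the subsolution verification in the case $\min_M\Ric_M<0$: you assert $\dot u=\tfrac{\varepsilon_0}{4}u(4-u^2)\le 2\varepsilon_0 u(2-u)$ "because $u<2$", but $\dot u-2\varepsilon_0u(2-u)=\tfrac{1}{4}\varepsilon_0\,u(2-u)(u-6)$, which is \emph{positive} when $\varepsilon_0<0$ and $0<u<2$; so for negative $\varepsilon_0$ your $u$ is a strict supersolution of the comparison ODE and the comparison principle does not deliver the stated rate. (With your differential inequality $\dot\phi\ge 2\varepsilon_0\phi(2-\phi)$ the honest conclusion for $\varepsilon_0<0$ is a lower bound decaying at a worse rate, e.g.\ the logistic solution $\tfrac{2}{1+Ce^{-4\varepsilon_0 t}}$, or equivalently the stated formula with a smaller constant in place of $\varepsilon_0$.) To be fair, the paper's own write-up has the same wrinkle: its comparison function is not a subsolution of $\dot v=\varepsilon_0 v(4-v^2)$ when $\varepsilon_0<0$ either, and in both treatments this is harmless, since for $\min_M\Ric_M<0$ only the qualitative facts are used — $\pind$ stays bounded away from zero on finite time intervals (so $T_a=T_g=T$ and (a) holds), while the uniform statements in the last sentence only invoke $\varepsilon_0\ge0$ or $T<\infty$. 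So your proposal is essentially the paper's argument, but you should either restrict the claimed subsolution inequality to $\varepsilon_0\ge0$ and use the exact (slower) comparison solution when $\varepsilon_0<0$, or adjust the constant in the exponential rate for the negative case.
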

\begin{proof}
	Since $M$ is compact, the evolving submanifolds will stay graphical at least on some time interval $[0,T_g)$ with $0<T_g\le T$. More precisely, there exist smooth families of diffeomorphisms $\{\varphi_t\}_{t\in[0,T_g)}\subset\operatorname{Diff}(M)$ and maps $\{f_t\}_{t\in[0,T_g)}:M\to N$ such that $F_t\circ\varphi_t^{-1}=\operatorname{Id}_M\times f_t,$ for any $t\in[0,T_g)$. They are given by $\varphi_t=\pi_M\circ F_t$ and $f_t=\pi_N\circ F_t\circ\varphi_t^{-1}$.
	
	The function $\varrho:M\times[0,T_g)\to\real{}$, given by $\varrho(t):=\min\{\pind(x,t):x\in M\}$, is continuous. Since $f_0$ is strictly area decreasing and $M$ is compact, we have $\varrho_0:=\varrho(0)>0$. Let $T_a\le T_g$ be the maximal time such that $\varrho(t)>0$ for all $t\in[0,T_a)$.
	
	The inequality 
	\begin{equation}\label{est2}
	1-\frac{\pind^2}{4}\le\frac{2(\lambda^2+\mu^2)}{(1+\lambda^2)(1+\mu^2)}\le 2\left(1-\frac{\pind^2}{4}\right)
	\end{equation}
	is elementary and, together with the curvature assumption \eqref{curvature main}, it implies that the quantity $\mathcal{Q}$ in equation \eqref{p8a} can be estimated by
	\begin{equation*}\label{hbound}
	\mathcal{Q}\ge \varepsilon_0\pind(4-\pind^2),
	\end{equation*}
	where
	$$
	\varepsilon_0:=
	\begin{cases}
	\frac{1}{4}\min_M\Ric_M, &\text{if}\quad \min_M\Ric_M\ge 0,\\
	\frac{1}{2}\min_M\Ric_M,  &\text{if}\quad \min_M\Ric_M<0.
	\end{cases}
	$$
	
	From the evolution equation \eqref{p8} for $\pind$, we derive the following estimate 
	$$\big(\nabla_{\dt}-\Delta\big)\pind\ge \varepsilon_0\pind(4-\pind^2)
	-\frac{1}{2\pind}\vert\nabla\pind\vert^2,\quad\text { on }M\times[0,T_a).
	$$
	Therefore the parabolic maximum principle shows that on $M\times[0,T_a)$ we get the first estimate in \eqref{est p}, namely
	\begin{equation}\nonumber
	\pind\ge\frac{2c_0e^{\varepsilon_0 t}}{\sqrt{1+c_0^2e^{2\varepsilon_0 t}}},
	\end{equation}
	where $c_0$ is the positive constant determined by $2c_0/\sqrt{1+c_0^2}=\varrho_0$. Therefore $\pind$ cannot become zero in finite time and in particular $T_a=T_g$. Moreover,
	$$\frac{1-\lambda^2}{1+\lambda^2}=\pind-\frac{1-\mu^2}{1+\mu^2}\ge \pind-1\ge \frac{2c_0e^{\varepsilon_0 t}}{\sqrt{1+c_0^2e^{2\varepsilon_0 t}}}-1,$$
	and since $\lambda$ denotes the largest singular value, we get
	$$\vert df_t\vert^2_{\gm}=\lambda^2+\mu^2\le 2\lambda^2\le 2\frac{\sqrt{1+c_0^2e^{2\varepsilon_0t}}-c_0e^{\varepsilon_0 t}}{c_0e^{\varepsilon_0 t}}\le\frac{2}{c_0}e^{-\varepsilon_0t},$$
	from which we obtain the second estimate in \eqref{est p}, now with $c_1:=2/c_0$. It is well-known that the mean curvature flow stays graphical as long as the maps $f_t$ stay bounded in $C^1$. Thus our estimate implies $T_g=T$. 
\end{proof}
\smallskip
\subsection{Estimates for the mean curvature}~

To obtain long-time existence of the flow one needs $C^2$-estimates. To derive such estimates we first prove an estimate on the mean curvature.
\begin{lemma}
	At points where the mean curvature $H$ is non-zero, we have
	\begin{eqnarray}
	\big(\nabla_{\dt}-\Delta\big)\vert H\vert^2\le -2\bigl\vert\nabla\vert H\vert\bigr\vert^2+2\vert A\vert^2\vert H\vert ^2+\mathcal{R},\label{est h}
	\end{eqnarray}
	where $\mathcal{R}$ is the quantity given by
	\begin{eqnarray}
	\mathcal{R}&=&\frac{2\lambda^2\mu^2\vert H\vert^2}{(1+\lambda^2)(1+\mu^2)}
	\big(\BR_M(\alpha_1,\alpha_2)-\sigma_N\big)\nonumber\\
	&&+2\operatorname{Ric}_{M}(v, v)-\frac{2\lambda^2\mu^2\vert H\vert^2}{(1+\lambda^2)(1+\mu^2)} 
	\big(\operatorname{Ric}_M(\alpha_1,\alpha_1)+\operatorname{Ric}_M(\alpha_2,\alpha_2)\big)\nonumber\\
	&&+2\sigma_{N}\vert w\vert^2. \label{mean a}
	\end{eqnarray}
	Here, the vectors $v$ and $w$ are given by
	\begin{equation}\label{vw}
	v:=\frac{\lambda H^{\xi} }{\sqrt{1+\lambda^{2}}} \alpha_{1} + \frac{\mu H^{\eta} }{\sqrt{1+\mu^{2}}} \alpha_{2},\,\, w:=-\frac{\lambda H^{\eta} }{\sqrt{1+\lambda^{2}}} \alpha_{1}+\frac{\mu H^{\xi} }{\sqrt{1+\mu^{2}}} \alpha_{2},
	\end{equation}
	where $\{\alpha_1,\dots,\alpha_m\}$, $\{\xi,\eta\}$ are the special bases arising from the singular value decomposition defined in subsection \ref{frames}.
\end{lemma}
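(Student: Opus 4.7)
My plan is to derive the inequality \eqref{est h} from the standard evolution equation for $|H|^2$ along mean curvature flow in an arbitrary Riemannian ambient manifold, and then to evaluate the ambient curvature contribution using the product structure $\rk=\pi_M^*\rm+\pi_N^*\rn$ together with the singular value decomposition of Subsection~\ref{frames}. First I would recall the general identity
$$
(\nabla_{\dt}-\Delta)|H|^2=-2|\nabla^{\perp}H|^2+2\sum_{i,j=1}^m\langle A_{ij},H\rangle^2+2\sum_{k=1}^m\rk(H,e_k,H,e_k),
$$
which is a standard consequence of the Simons-type evolution equation for $H$ under mean curvature flow in any Riemannian ambient manifold. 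Kato's inequality $|\nabla^{\perp}H|^2\ge|\nabla|H||^2$, valid wherever $H\neq 0$, together with the pointwise bound $\sum_{i,j}\langle A_{ij},H\rangle^2\le|A|^2|H|^2$ obtained by applying Cauchy--Schwarz in the normal bundle term by term, then produces the first two terms on the right-hand side of \eqref{est h}. The remaining task is to identify $2\sum_k\rk(H,e_k,H,e_k)$ with the quantity $\mathcal{R}$ in \eqref{mean a}.

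For this identification I would split the sum via $\rk=\pi_M^*\rm+\pi_N^*\rn$ and evaluate the two pieces separately. Writing $H=H^{\xi}\xi+H^{\eta}\eta$ and using the explicit expressions for $\xi,\eta$ and $e_1,\dots,e_m$ from Subsection~\ref{frames}, a direct computation yields $\pi_M H=-v$ together with $\pi_M e_1=\alpha_1/\sqrt{1+\lambda^2}$, $\pi_M e_2=\alpha_2/\sqrt{1+\mu^2}$, $\pi_M e_k=\alpha_k$ for $k\ge 3$, and $\pi_N e_1=\lambda\beta_1/\sqrt{1+\lambda^2}$, $\pi_N e_2=\mu\beta_2/\sqrt{1+\mu^2}$, $\pi_N e_k=0$ for $k\ge 3$. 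Expanding the $M$-sum in the orthonormal frame $\{\alpha_k\}$ of $TM$ produces $\Ric_M(v,v)$ plus correction terms coming from the rescalings of $\pi_M e_1,\pi_M e_2$, which collapse via $\rm(v,\alpha_i,v,\alpha_i)$ to a single term proportional to $\sigma_M(\alpha_1\wedge\alpha_2)$ with coefficient $-\lambda^2\mu^2|H|^2/((1+\lambda^2)(1+\mu^2))$. For the $N$-sum, since $\dim N=2$ the identity $\rn(X,Y,X,Y)=\sigma_N(|X|^2|Y|^2-\langle X,Y\rangle^2)$ applies, and after simplification the nontrivial terms (those with $k=1,2$) rearrange into $\sigma_N|w|^2$ minus $\sigma_N\lambda^2\mu^2|H|^2/((1+\lambda^2)(1+\mu^2))$.

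Finally, invoking the identity $\BR_M(\alpha_1,\alpha_2)=\Ric_M(\alpha_1,\alpha_1)+\Ric_M(\alpha_2,\alpha_2)-\sigma_M(\alpha_1\wedge\alpha_2)$ allows me to combine the $\sigma_M$ correction from the $M$-part and the $\sigma_N$ correction from the $N$-part into the bi-Ricci term in \eqref{mean a}. The main obstacle is purely algebraic: one must carefully track the denominators $(1+\lambda^2)(1+\mu^2)$ and the quadratic combinations of $H^{\xi}$ and $H^{\eta}$, so that the somewhat asymmetric vector $w$ defined in \eqref{vw} emerges naturally from the $N$-contribution rather than having to be guessed in advance. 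Once the two sums are grouped correctly, the identification with $\mathcal{R}$ is forced by the bi-Ricci identity.
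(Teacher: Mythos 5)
Your proposal is correct and follows essentially the same route as the paper: the same evolution equation for $\vert H\vert^2$ (which the paper quotes from \cite[Corollary 3.8]{smoczyk1}), Kato's inequality plus Cauchy--Schwarz for the first two terms, and then the splitting of the ambient curvature term via the product structure and the singular value frames, with the $\sigma_M$ and $\sigma_N$ corrections regrouped through the bi-Ricci identity exactly as in the paper's computation of $\mathcal{D}_1$ and $\mathcal{D}_2$. Your intermediate identifications ($\pi_M H=-v$, the $-\lambda^2\mu^2\vert H\vert^2\sigma_M/((1+\lambda^2)(1+\mu^2))$ correction, and the N-part equal to $\sigma_N\vert w\vert^2$ minus the analogous $\sigma_N$ correction) all match the paper's formulas.
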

\begin{proof}
	Recall from \cite[Corollary 3.8]{smoczyk1} that
	\begin{eqnarray*}
	\big(\nabla_{\dt}-\Delta\big)|H|^2=-2|\nabla^\perp H|^2
+2|A^H|^2+2\sum_{k=1}^m\rk(\dF(e_k),H,\dF(e_k),H).
	\end{eqnarray*}
	From the Cauchy-Schwarz inequality we have 
	$$|A^H|^2\le|A|^2|H|^2.$$
	Moreover, at points where
	$H\neq 0$,  we have $|\nabla^\perp H|^2\ge |\nabla|H||^2$. Hence, 
	\begin{eqnarray}
	\big(\nabla_{\dt}-\Delta\big)|H|^2&\le& -2|\nabla|H||^2+2|A|^2|H|^2\nonumber\\
	&&+2\sum_{k=1}^m\rk(dF(e_k),H,dF(e_k),H).\label{eq mean}
	\end{eqnarray}
	Let us compute the last curvature term in \eqref{eq mean}, which in the sequel we call
	$$\mathcal{R}:=2\sum_{k=1}^m\rk(\dF(e_k),H,\dF(e_k),H).$$
	We have
	\begin{eqnarray*}
	\mathcal{R}&\!\!\!=\!\!\!&2\sum_{k=1}^m\rk\Big(\tfrac{\alpha_{k} \oplus \lambda_{k} \beta_{k}}{\sqrt{1+\lambda_{k}^{2}}}, H^{\xi} \xi+H^{\eta} \eta, \tfrac{\alpha_{k} \oplus \lambda_{k} \beta_{k}}{\sqrt{1+\lambda_{k}^{2}}}, H^{\xi} \xi + H^{\eta} \eta\Big)\\
	&\!\!\!=\!\!\!&\underbrace{\sum_{k=1}^m\frac{2}{1+\lambda_k^2}\rm\Big(\alpha_k,\tfrac{ \lambda H^{\xi}}{\sqrt{1+\lambda^2}}\alpha_{1}+\tfrac{\mu H^{\eta}  }{\sqrt{1+\mu^2}}\alpha_2,\alpha_{k},\tfrac{ \lambda H^{\xi}}{\sqrt{1+\lambda^2}}\alpha_{1}+\tfrac{\mu H^{\eta}  }{\sqrt{1+\mu^2}}\alpha_2\Big)}_{=:\mathcal{D}_1}\\
	&\!\!\!+\!\!\!&\underbrace{\sum_{k=1}^2\tfrac{2\lambda_k^2}{1+\lambda_k^2} \rn\Big(\beta_{k},  \tfrac{H^{\xi}}{\sqrt{1+\lambda^2}}\beta_{1}+ \tfrac{H^{\eta}}{\sqrt{1+\mu^2}}\beta_{2}, \beta_{k}, \tfrac{H^{\xi}}{\sqrt{1+\lambda^2}}\beta_{1}+ \tfrac{H^{\eta}}{\sqrt{1+\mu^2}}\beta_{2}\Big)}_{=:\mathcal{D}_2}.
	\end{eqnarray*}
	For $\mathcal{D}_2$ we get
	$$\mathcal{D}_2=\frac{2\lambda^{2}\vert H^{\eta}\vert^{2}+2\mu^{2}\vert H^{\xi}\vert^{2}}{\left(1+\lambda^{2}\right)\left(1+\mu^{2}\right)} \sigma_{N}.$$
	In the next step we compute $\mathcal{D}_1$, and use $v$ defined as in \eqref{vw} to obtain
	\begin{eqnarray*}
	\mathcal{D}_1&\!\!\!=\!\!\!&2\,\frac{\mu^{2}\vert H^{\eta}\vert^{2}+\lambda^{2}\vert H^{\xi}\vert^{2}}{\left(1+\lambda^{2}\right)\left(1+\mu^{2}\right)} \sigma_{M}(\alpha_{1}\wedge \alpha_{2})+2|v|^2\sum_{k\geq 3}\rm\Big(\alpha_k,\frac{v}{|v|},\alpha_k,\frac{v}{|v|}\Big)\\
	&\!\!\!=\!\!\!&2\operatorname{Ric}_{M}(v, v)-\frac{2\lambda^{2} \mu^{2}\vert H\vert^{2}}{\left(1+\lambda^{2}\right)\left(1+\mu^{2}\right)}\, \sigma_{M}(\alpha_{1}\wedge \alpha_{2}).
	\end{eqnarray*}
	Thus
	\begin{eqnarray*}
	\mathcal{R}&\!\!\!\!\!=\!\!\!\!\!&2\operatorname{Ric}_{M}(v, v)-\frac{2\lambda^{2} \mu^{2}\vert H\vert^{2}}{\left(1+\lambda^{2}\right)\left(1+\mu^{2}\right)} \sigma_{M}\left(\alpha_{1}, \alpha_{2}\right)+\frac{2\lambda^{2}\vert H^{\eta}\vert^{2}+2\mu^{2}\vert H^{\xi}\vert^{2}}{\left(1+\lambda^{2}\right)\left(1+\mu^{2}\right)} \sigma_{N}\\
	&\!\!\!\!\!=\!\!\!\!\!&\frac{2\lambda^2\mu^2\vert H\vert^2}{(1+\lambda^2)(1+\mu^2)}
	\big({\BR}_M(\alpha_1,\alpha_2)-\sigma_N\big)+2\operatorname{Ric}_{M}(v, v)\nonumber\\
	&&\!\!\!\!\!-\frac{2\lambda^2\mu^2\vert H\vert^2}{(1+\lambda^2)(1+\mu^2)} \big(\operatorname{Ric}_M(\alpha_1,\alpha_1)+\operatorname{Ric}_M(\alpha_2,\alpha_2)\big)\nonumber\\
	&&\!\!\!\!\!+2\sigma_{N}\,\underbrace{\frac{\lambda^{2}\vert H^{\eta}\vert^{2}+\mu^{2}\vert H^{\xi}\vert^{2}+\lambda^2\mu^2\vert H\vert^2}{\left(1+\lambda^{2}\right)\left(1+\mu^{2}\right)}}_{=\vert w\vert^2}.
	\end{eqnarray*}
This proves the lemma.
\end{proof}
\begin{lemma}
	Let us assume the main curvature condition \eqref{curvature main}. At points where the mean curvature $H$ is non-zero, the function $\varTheta:=\pind^{-1}\vert H\vert^2$ satisfies 
	\begin{eqnarray}
	\big(\nabla_{\dt}-\Delta\big)\varTheta\le\pind^{-1}\langle\nabla\varTheta,\nabla\pind\rangle -2\pind^{-1}\left(\operatorname{Ric}_M(w,w)-\sigma_N\vert w\vert^2\right),\label{est theta}
	\end{eqnarray}
	where $w$ is defined as in \eqref{vw}.
\end{lemma}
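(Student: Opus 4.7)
The plan is to apply the quotient rule for the heat operator to $\varTheta = \pind^{-1}|H|^2$ and combine with the evolution equations of $|H|^2$ and $\pind$ from the previous two lemmas. A direct calculation gives
\begin{equation*}
(\nabla_{\dt}-\Delta)\varTheta = \pind^{-1}(\nabla_{\dt}-\Delta)|H|^2 - \pind^{-2}|H|^2(\nabla_{\dt}-\Delta)\pind + 2\pind^{-1}\langle\nabla\varTheta,\nabla\pind\rangle.
\end{equation*}
One copy of $\pind^{-1}\langle\nabla\varTheta,\nabla\pind\rangle$ will be kept as the gradient term appearing in \eqref{est theta}, and the rest must be estimated from above by $-2\pind^{-1}(\operatorname{Ric}_M(w,w)-\sigma_N|w|^2)$.

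After substituting \eqref{est h} and \eqref{p8}, the terms $\pm 2|A|^2|H|^2/\pind$ cancel, and the three non-negative contributions in \eqref{p8} (those proportional to $|A^\xi_{ki}|^2(1-\sind_{11})$, $|A^\eta_{ki}|^2(1-\sind_{22})$, and $|A^\xi_{1k}\tind_{22}+A^\eta_{2k}\tind_{11}|^2$) enter with a non-positive prefactor $-|H|^2/\pind^2$ and may be discarded. The remaining gradient contributions
\begin{equation*}
-\frac{2|\nabla|H||^2}{\pind} + \frac{|H|^2|\nabla\pind|^2}{2\pind^3} + \frac{1}{\pind}\langle\nabla\varTheta,\nabla\pind\rangle
\end{equation*}
reassemble, via $\nabla\varTheta = \pind^{-1}\nabla|H|^2 - \pind^{-2}|H|^2\nabla\pind$ and $\nabla|H|^2 = 2|H|\nabla|H|$, into the complete square $-(2/\pind)\bigl|\nabla|H| - |H|\nabla\pind/(2\pind)\bigr|^2 \le 0$, which is then dropped.

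This reduces the problem to the pointwise curvature inequality
\begin{equation*}
\mathcal{R} - \pind^{-1}|H|^2\mathcal{Q} \le -2\bigl(\operatorname{Ric}_M(w,w) - \sigma_N|w|^2\bigr).
\end{equation*}
Expanding $\operatorname{Ric}_M(v,v)$ and $\operatorname{Ric}_M(w,w)$ in the orthonormal basis $\{\alpha_1,\alpha_2\}$ by means of the definitions \eqref{vw}, and using $|H|^2 = (H^\xi)^2 + (H^\eta)^2$, the coefficients of $\operatorname{Ric}_M(\alpha_1,\alpha_1)$, $\operatorname{Ric}_M(\alpha_2,\alpha_2)$, and the mixed $\operatorname{Ric}_M(\alpha_1,\alpha_2)$ in the quantity $\mathcal{R} - \pind^{-1}|H|^2\mathcal{Q} + 2\operatorname{Ric}_M(w,w) - 2\sigma_N|w|^2$ all vanish identically after a short algebraic simplification. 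The only surviving term is proportional to $\BR_M(\alpha_1,\alpha_2)-\sigma_N$ with coefficient
\begin{equation*}
\frac{2\lambda^2\mu^2|H|^2}{(1+\lambda^2)(1+\mu^2)}\Bigl(1 - \frac{2+\pind}{\pind}\Bigr) = -\frac{4\lambda^2\mu^2|H|^2}{\pind(1+\lambda^2)(1+\mu^2)} \le 0,
\end{equation*}
the sign coming from the main curvature hypothesis \eqref{curvature main}.

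The only real obstacle is the final algebraic cancellation: the diagonal Ricci terms from $2\operatorname{Ric}_M(v,v)$ and from the $(\operatorname{Ric}_M(\alpha_1,\alpha_1)+\operatorname{Ric}_M(\alpha_2,\alpha_2))$ correction inside $\mathcal{R}$, combined with the diagonal Ricci contributions extracted from $\pind^{-1}|H|^2\mathcal{Q}$, must reproduce exactly $-2\operatorname{Ric}_M(w,w)$, with the mixed $\alpha_1\alpha_2$ coefficient (arising only from the cross terms of $v$ and $w$) matching as well. Once this identity is verified, assembling the three steps yields \eqref{est theta}.
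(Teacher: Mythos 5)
Your proposal is correct and follows essentially the same route as the paper: the quotient-rule identity for $(\nabla_{\dt}-\Delta)\varTheta$, cancellation of the $2|A|^2|H|^2\pind^{-1}$ terms, discarding the non-negative terms of \eqref{p8} (which enter with factor $-|H|^2\pind^{-2}$), absorbing the gradient terms into a complete square (your square is exactly the paper's $-\tfrac12\varTheta^{-1}|\nabla\varTheta|^2$), and the identity $\operatorname{Ric}_M(v,v)+\operatorname{Ric}_M(w,w)=\bigl(\tfrac{\lambda^2}{1+\lambda^2}\operatorname{Ric}_M(\alpha_1,\alpha_1)+\tfrac{\mu^2}{1+\mu^2}\operatorname{Ric}_M(\alpha_2,\alpha_2)\bigr)|H|^2$, which indeed makes the residual Ricci terms reproduce $-2\operatorname{Ric}_M(w,w)$. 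The only cosmetic difference is that the paper first weakens \eqref{p8} to an inequality with coefficient $\pind$ on the $\BR_M-\sigma_N$ term so that it cancels exactly, whereas you keep the $(2+\pind)$ coefficient and end up with the extra non-positive multiple of $\BR_M(\alpha_1,\alpha_2)-\sigma_N$, both uses of \eqref{curvature main} being equivalent.
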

\begin{proof} 
	From the evolution equation for $\pind$, and from \eqref{curvature main} we get
	\begin{eqnarray}
	\big(\nabla_{\dt}-\Delta\big)\pind&\ge& -\frac{1}{2\pind}|\nabla\pind|^2+2|A|^2\pind\\
	&&+\frac{2\lambda^2\mu^2\pind}{(1+\lambda^2)(1+\mu^2)} \big(\BR_M(\alpha_1,\alpha_2)-\sigma_N\big)\nonumber\\
	&&+\frac{2\pind}{(1+\lambda^2)(1+\mu^2)}\bigl(\lambda^2\operatorname{Ric}_M(\alpha_1,\alpha_1)+\mu^2\operatorname{Ric}_M(\alpha_2,\alpha_2)\bigr).\nonumber\label{est p2}
	\end{eqnarray}
	Then \eqref{est p2}, \eqref{est h}, and the formula
	$$\big(\nabla_{\dt}-\Delta\big)\varTheta-2\pind^{-1}\langle\nabla\pind,\nabla\varTheta\rangle=\pind^{-1}\bigl(\nabla_{\dt}-\Delta\bigr)\vert H\vert^2-\pind^{-2}\vert H\vert^2\bigl(\nabla_{\dt}-\Delta\bigr)\pind$$
	imply, after some cancellations, that at points where $H\neq 0$, it holds
	\begin{eqnarray}\label{theta}
	\big(\nabla_{\dt}-\Delta\big)\varTheta&\!\!\!-\!\!\!&2\pind^{-1}\langle\nabla\pind,\nabla\varTheta\rangle\\
	&\!\!\!\le\!\!\!&-2\pind^{-1}\bigl\vert\nabla\vert H\vert\bigr\vert^2+\frac{1}{2}\pind^{-3}\vert H\vert^2\vert\nabla\pind\vert^2+\mathcal{E}\nonumber,
	\end{eqnarray}
	where
	$$
	\mathcal{E}\!=\!2\varTheta\Big(\frac{\operatorname{Ric}_{M}\left(v,v\right)+\sigma_N\vert w\vert^2}{\vert H\vert^2}-\frac{\lambda^2}{1+\lambda^2}\operatorname{Ric}_M(\alpha_1,\alpha_1)-\frac{\mu^2}{1+\mu^2}\operatorname{Ric}_M(\alpha_2,\alpha_2)\Big).
	$$
	The term $\mathcal{E}$ is of the form $\mathcal{E}=2\varTheta{\mathcal{F}}$ and ${\mathcal{F}}$ might vanish at some points, for example, if $\lambda=\mu=0$ or if $\mu=\vert H^\eta\vert=0$. This shows that we cannot expect the estimate ${\mathcal{F}}<0$ to hold in general. Since we assume $H\neq 0$, the two gradient terms in the first line of \eqref{theta} can be combined and this gives
	\begin{equation}\label{grad}
	-2\pind^{-1}\bigl\vert\nabla\vert H\vert\bigr\vert^2+\frac{1}{2}\pind^{-3}\vert H\vert^2\vert\nabla\pind\vert^2=-\frac{1}{2}\varTheta^{-1}\vert\nabla\varTheta\vert^2-\pind^{-1}\langle\nabla\varTheta,\nabla\pind\rangle.
	\end{equation}
	From the definition of $v,w$ in \eqref{vw}, we  get
	\begin{eqnarray*}
	\operatorname{Ric}_{M}(v,v)&\!\!\!+\!\!\!&\operatorname{Ric}_{M}(w,w)\\
	&\!\!\!=\!\!\!&\Big(\frac{\lambda^2}{1+\lambda^2}\operatorname{Ric}_M(\alpha_1,\alpha_1)+\frac{\mu^2}{1+\mu^2}\operatorname{Ric}_M(\alpha_2,\alpha_2)\Big)\vert H\vert^2.
	\end{eqnarray*}
	Therefore, together with \eqref{grad}, we can simplify \eqref{theta} and finally obtain the desired inequality for $\varTheta$.
\end{proof}
Now observe that
$$
\vert w\vert^2
=\frac{\lambda^2}{1+\lambda^2}\vert H\vert^2+\frac{\mu^2-\lambda^2}{(1+\lambda^2)(1+\mu^2)}\vert H^\xi\vert^2\le\frac{\lambda^2}{1+\lambda^2}\vert H\vert^2\le\vert H\vert ^2.
$$
Let 
\begin{equation}\label{varepsilon1}
\varepsilon_1:={\sup}_N\sigma_N-{\min}_{\{\vert u\vert=1\}}\left(\Ric_M(u,u)\right).
\end{equation}
Then, at points where $H\neq 0$, inequality \eqref{est theta} implies the estimate
\begin{eqnarray}
	 \big(\nabla_{\dt}-\Delta\big)\varTheta&\le&\pind^{-1}\langle\nabla\varTheta,\nabla\pind\rangle+2\max\{0,\varepsilon_1\}\varTheta.\label{est thetab}
\end{eqnarray}
Applying the maximum principle to \eqref{est thetab}, taking into account Lemma \ref{main lemma}, and the fact that $\pind\le 2$, we immediately obtain the following estimate for the mean curvature.
\begin{lemma}\label{lemma mean}
			Let $M$ be a compact Riemannian manifold $M$ of dimension $m>1$ and let $N$ be a complete Riemannian surface of bounded geometry.  Suppose they satisfy the main curvature
			assumption \eqref{curvature main} on the bi-Ricci curvature. Let $[0,T)$ denote the maximal time interval on which the smooth solution of the mean curvature flow $\{F_t\}_{t\in[0,T)}:M\to M\times N$ exists, with the initial condition given by $F_0=\operatorname{Id}_M\times f_0$, and where $f_0:M\to N$ is a strictly area decreasing map. Then the following hold:
		\begin{enumerate}[\normalfont(a)]
			\item The function $\varTheta:=\vert H\vert^2/\pind$ is well-defined for $t\in[0,T)$ and it satisfies
			\begin{equation}\label{est theta2}
			\varTheta\le\max_{t=0}\varTheta\cdot e^{2\max\{0,\varepsilon_1\}\cdot t},\text{ for all }t\in[0,T),
			\end{equation}
			where $\varepsilon_1$ is the constant defined in \eqref{varepsilon1}.
			\medskip
			\item There exists a constant $a_0>0$, depending only on $f_0$, such that
			\begin{equation}\label{est mean2}
			\vert H\vert^2\le a_0\,e^{2\max\{0,\varepsilon_1\}\cdot t},\text{ for all }t\in[0,T).
			\end{equation}
			In particular, if $\,\Ric_M\ge\sup_N\sigma_N$, then $\vert H\vert^2\le a_0$ for all $t\in[0,T)$.
		\end{enumerate}
\end{lemma}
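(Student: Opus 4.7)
The plan is a direct application of the parabolic maximum principle to the scalar inequality already derived in \eqref{est thetab}, followed by the elementary bound $\pind\le 2$ to turn the estimate on $\varTheta$ into one on $\vert H\vert^2$. No new computation is required beyond what is already in the preceding two lemmas.

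First I would verify that $\varTheta$ is well-defined and smooth on $M\times[0,T)$. By Lemma \ref{main lemma}(b) we have $\pind(x,t)>0$ for every $(x,t)\in M\times[0,T)$, so the quotient $\varTheta=\vert H\vert^2/\pind$ is a smooth, non-negative function on the whole of $M\times[0,T)$. Set $\phi(t):=\max_{x\in M}\varTheta(x,t)$, which exists by compactness of $M$. If $\phi(t_0)=0$ there is nothing to prove at $t_0$. Otherwise, at any spatial maximum $x_0$ of $\varTheta(\cdot,t_0)$ we have $\varTheta(x_0,t_0)=\phi(t_0)>0$, so in particular $H(x_0,t_0)\neq 0$, and the inequality \eqref{est thetab} applies at $(x_0,t_0)$. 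At such a point $\nabla\varTheta(x_0,t_0)=0$ and $\Delta\varTheta(x_0,t_0)\le 0$, so the mixed gradient term in \eqref{est thetab} vanishes and Hamilton's trick yields the differential inequality $D^+\phi\le 2\max\{0,\varepsilon_1\}\,\phi$ on $[0,T)$. Integrating this gives
$$
\varTheta(\cdot,t)\le\phi(t)\le\phi(0)\,e^{2\max\{0,\varepsilon_1\}\,t}={\max}_{t=0}\varTheta\cdot e^{2\max\{0,\varepsilon_1\}\,t},
$$
which is exactly \eqref{est theta2}, proving part (a).

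For part (b), recall that
$$
\pind=\frac{2(1-\lambda^2\mu^2)}{(1+\lambda^2)(1+\mu^2)}\le 2,
$$
because $1-\lambda^2\mu^2\le 1$ and $(1+\lambda^2)(1+\mu^2)\ge 1$. Hence $\vert H\vert^2=\pind\cdot\varTheta\le 2\varTheta$, and setting $a_0:=2\max_{t=0}\varTheta$ together with the estimate from part (a) gives \eqref{est mean2}. Finally, if $\Ric_M\ge{\sup}_N\sigma_N$ then by definition \eqref{varepsilon1} of $\varepsilon_1$ we have $\varepsilon_1\le 0$, so $\max\{0,\varepsilon_1\}=0$ and the exponential factor disappears, leaving $\vert H\vert^2\le a_0$ for all $t\in[0,T)$.

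The only delicate point — and it is quite mild — is that \eqref{est thetab} is only valid at points where $H\neq 0$. This is handled by the dichotomy in the argument above: either the spatial maximum of $\varTheta$ at time $t$ is zero, in which case the claimed bound is trivial, or it is strictly positive, in which case the maximum is automatically attained in the open set $\{H\neq 0\}$ where \eqref{est thetab} holds classically. Thus the maximum principle can be invoked without any regularization of $\vert H\vert$ at its zeros.
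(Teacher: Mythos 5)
Your proposal is correct and follows essentially the same route as the paper: applying the parabolic maximum principle to inequality \eqref{est thetab}, using Lemma \ref{main lemma} to ensure $\pind>0$ so that $\varTheta$ is well-defined, and then using $\pind\le 2$ to pass from $\varTheta$ to $\vert H\vert^2$. Your explicit handling of the set where $H=0$ via the dichotomy at the spatial maximum is a welcome clarification of a point the paper leaves implicit, but it is not a different argument.
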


\section{The barrier theorem and an entire graph lemma}\label{barriergraph}
\noindent In the proof of Theorem \ref{main}, we will need the following barrier theorem that generalizes the
well-known barrier theorem for mean curvature flow of hypersurfaces to any codimension. Before we state 
and prove it, we recall the definition of $m$-convexity.
\begin{definition}
	A smooth function $\phi:P\to\real{}$ on a Riemannian manifold $(P,\gp)$ of dimension $p\ge m$ is called $m$-convex at $y\in P$, if the Hessian $D^2\phi$ of $\phi$ at $y$ satisfies
	$$\sum_{k=1}^mD^2\phi(e_k,e_k)\ge 0$$
	for any choice of $\,m$ orthonormal vectors $\{e_1,\dots, e_m\}\in T_yP$.
\end{definition}
\begin{mythm}[Barrier theorem for the mean curvature flow]\label{barrier}~\\
	Let $F_t:M\to (P,\gp)$, $t\in[0,T)$ be a mean curvature flow of a compact manifold $M$ of dimension $m$ into a complete Riemannian manifold $(P,\gp)$ of dimension $p$. Suppose that $\phi:P\to\real{}$ is a smooth function and $c\in\real{}$ a constant such that $\phi$ is $m$-convex on
	$P^c:=\{y\in P:\phi(y)< c\}.$
	If the initial image $F_0(M)$ is contained in $P^c$, then $F_t(M)\subset P^c$ for all $t\in[0,T)$.
\end{mythm}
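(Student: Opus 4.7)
The idea is to apply the parabolic maximum principle to the composition $u:=\phi\circ F:M\times[0,T)\to\real{}$ and exploit the $m$-convexity of $\phi$ to show that $u$ is a subsolution of the heat equation on $(M,g_t)$ whenever the flow lies inside $P^c$.

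The first step is to derive the evolution equation of $u$. Let $\{e_i\}_{i=1}^m$ be a $g_t$-orthonormal frame on $TM$, where $g_t:=F_t^*\gp$ denotes the induced metric. Since $F_t:(M,g_t)\to(P,\gp)$ is an isometric immersion, the Gau{\ss} formula $\nabla_{e_j}\bigl(dF(e_i)\bigr)=dF(\nabla_{e_j}e_i)+A(e_i,e_j)$ combined with the standard chain rule yields
\begin{equation*}
\Delta_{g_t}u=\sum_{i=1}^m D^2\phi\bigl(dF(e_i),dF(e_i)\bigr)+d\phi(H),
\end{equation*}
while the mean curvature flow equation gives $\partial_t u=d\phi(\partial_t F)=d\phi(H)$. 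Subtracting,
\begin{equation*}
(\partial_t-\Delta_{g_t})\,u\,=\,-\sum_{i=1}^m D^2\phi\bigl(dF(e_i),dF(e_i)\bigr).
\end{equation*}
Because $\{dF(e_i)\}_{i=1}^m$ is an orthonormal $m$-frame in $T_{F(x,t)}P$, the $m$-convexity hypothesis forces the right-hand side to be non-positive at every point $(x,t)$ with $F(x,t)\in P^c$.

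The second step is a first-contact argument. Since $F_0(M)$ is a compact subset of the open set $P^c$, the quantity $u^{\ast}(0):=\max_M(\phi\circ F_0)$ satisfies $u^{\ast}(0)<c$. Define
\begin{equation*}
t_0:=\inf\bigl\{t\in[0,T):\,{\max}_{x\in M}\,u(x,t)\ge c\bigr\},
\end{equation*}
and assume for contradiction that $t_0<T$. By continuity, $u^{\ast}(t_0)=c$, and for every $t\in[0,t_0)$ the image $F_t(M)$ is contained in $P^c$; consequently, the sub-heat inequality derived above holds throughout $M\times[0,t_0)$. The parabolic maximum principle on the compact manifold $M$ then gives $u^{\ast}(t)\le u^{\ast}(0)<c$ for every $t\in[0,t_0)$, and passing to the limit $t\to t_0^-$ contradicts $u^{\ast}(t_0)=c$.

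I do not anticipate a serious obstacle. The two points that deserve care are (i) that $F_t$ is an isometric immersion with respect to its own induced metric $g_t$, so that $\{dF(e_i)\}$ is genuinely orthonormal in $(P,\gp)$, and (ii) that $m$-convexity is invoked only at interior points of $P^c$, with the boundary behaviour of $\phi$ irrelevant thanks to the first-contact argument.
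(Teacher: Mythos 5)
Your proposal is correct and follows essentially the same route as the paper: you compute $(\partial_t-\Delta_{g_t})(\phi\circ F)=-\operatorname{trace}_{g_t}\bigl(F_t^*D^2\phi\bigr)\le 0$ inside $P^c$ via the Gau{\ss} formula and then conclude with the parabolic maximum principle. The only cosmetic difference is that you phrase the conclusion as a first-contact/limit argument with the weak maximum principle, whereas the paper takes a maximal time interval and invokes the strong maximum principle; the two are interchangeable here.
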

\begin{proof}
	Define the function $\omega:M\times[0,T)\to\real{}$, given by $\omega=\phi\circ F_t$.
	Since $\dt F_t=H_t$ we get
	$$\dt\omega=D\phi(H_t)$$
	and moreover
	$$\Delta\omega=\operatorname{trace}_{\gind_t}\bigl(F_t^*D^2\phi\bigr)+D\phi(H_t),$$
	where $\Delta$ denotes the Laplace-Beltrami operator on $M$ with respect to the induced metric $\gind_t=F_t^*\gp$. Thus
	$$\dt\omega=\Delta\omega-\operatorname{trace}_{\gind_t}\bigl(F_t^*D^2\phi\bigr).$$
	Since $\phi$ is $m$-convex on $P^c$ and $F_t$ is an immersion, we see that 
	$$\operatorname{trace}_{\gind_t}\bigl(F_t^*D^2\phi\bigr)\ge 0$$
	as long as $F_t(M)\subset P^c$. Since $M$ is compact and $P^c$ is open, we observe that $F_t(M)\subset P^c$ will hold on some maximal time interval $[0,t_0)\subset [0,T)$. It remains to show that $t_0=T$. Assume $t_0<T$. By continuity, we have
	$$\dt\omega\le\Delta\omega$$
	on $[0,t_0]$. Then the strong parabolic maximum principle implies that $\omega<c$ on $[0,t_0]$ which gives $F_{t_0}(M)\subset P^c$. This contradicts the maximality of $t_0$. Thus $t_0=T$ and $F_t(M)\subset P^c$ for all $t\in[0,T)$.
\end{proof}

\begin{remark}
As we pointed out in Remark \ref{remark curvature}, the long-time existence of
the mean curvature flow does not ensure  smooth convergence. However, in some situations, the geometry
of the ambient space forces the submanifolds to stay in a compact region. For instance, if the ambient space
possesses a compact totally convex set $\mathscr{C}$, then we can use Theorem \ref{barrier} with $\phi$
chosen as the 
squared distance function
to $\mathscr{C}$ to show that the flow stays in a compact region. Recently,
Tsai and Wang \cite{tsai} introduced the notion of strongly stable minimal submanifolds.  They proved that
if $\varSigma$ is an $m$-dimensional compact strongly stable minimal submanifold of a Riemannian manifold $P$, then the squared distance function
to $\varSigma$ is $m$-convex in a tubular neighbourhood of $\varSigma$. Moreover,  
if $\varGamma$ is a compact $m$-dimensional submanifold that is $C^1$-close to $\varSigma$,
then the mean curvature flow $\varGamma_t$ with $\varGamma_0=\varGamma$ exists for all time,
and $\varGamma_t$ smoothly converges to
$\varSigma$ as $t\to\infty$.
We refer also to Lotay and Schulze \cite{lotay} for further generalizations and applications of 
the stability result in \cite{tsai}.
\end{remark}

The next lemma turns out to be very useful and it is a direct consequence of the preceding barrier theorem.
\begin{lemma}\label{lemma convergence}
	Let $(M,\gm)$ be a compact and $(N,\gn)$ a complete Riemannian manifold. Suppose $\{f_t\}_{t\in[0,\infty)}$ is uniformly bounded in $C^k(M,N)$, for all $k\ge 1$, and their graphs evolve by mean curvature flow. If there exists a sequence of times $\{t_n\}_{n\in\natural{}}$, with $\lim_{n\to\infty}t_n=\infty$, such that the sequence $\{f_{t_n}\}_{n\in\natural{}}$ converges in $C^0(M,N)$ to a constant map $f_\infty:M\to N$, then the whole flow $\{f_t\}_{t\in[0,\infty)}$ smoothly converges to $f_\infty$.
\end{lemma}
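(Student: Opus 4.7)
The plan is to invoke the barrier theorem (Theorem \ref{barrier}) with the squared distance function from the constant limit value $q:=f_\infty(M)\in N$. The idea is that once some $F_{t_n}$ has entered a sufficiently small normal ball around $q$, an $m$-convex barrier built from $d_q^2$ forbids the flow from leaving it again, and since the subsequence $\{f_{t_n}\}$ concentrates at $q$, such balls can be taken arbitrarily small.

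First I would choose $r_0>0$ small enough that $d_q^2$ is smooth and strictly convex on the ball $B_{r_0}(q)\subset N$; this is possible for any complete Riemannian manifold, since in normal coordinates $D^2d_q^2$ at $q$ equals $2\gn$ and the Hessian stays positive definite in a small enough neighbourhood by continuity. I would then extend $d_q^2$ to a smooth function $\psi:N\to\real{}$ that equals $d_q^2$ on $B_{r_0/2}(q)$ and satisfies $\psi\ge r_0^2/4$ elsewhere (a standard cutoff), and set $\phi:=\psi\circ\pi_N:M\times N\to\real{}$. For any $c<r_0^2/4$ the set $\{\phi<c\}$ sits inside $M\times B_{r_0/2}(q)$, where $\phi=d_q^2\circ\pi_N$. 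Since the fibres of $\pi_N$ are totally geodesic in the product metric $\gk$, one has $D^2\phi(v,w)=D^2 d_q^2(v^N,w^N)$, and so for any orthonormal frame $\{e_1,\dots,e_m\}\subset T(M\times N)$ with $e_i=e_i^M\oplus e_i^N$,
$$
\sum_{i=1}^m D^2\phi(e_i,e_i)=\sum_{i=1}^m D^2 d_q^2(e_i^N,e_i^N)\ge 0,
$$
so $\phi$ is $m$-convex on $\{\phi<c\}$ and the hypotheses of Theorem \ref{barrier} are met. Given any $\varepsilon\in(0,r_0/2)$, the $C^0$-convergence $f_{t_n}\to q$ yields some $n_0$ with $f_{t_{n_0}}(M)\subset B_\varepsilon(q)$, hence $F_{t_{n_0}}(M)\subset\{\phi<\varepsilon^2\}$; applying Theorem \ref{barrier} to the time-translated flow gives $F_t(M)\subset M\times B_\varepsilon(q)$ for all $t\ge t_{n_0}$, and as $\varepsilon$ is arbitrary this means $f_t\to q$ in $C^0$ as $t\to\infty$.

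Finally, the uniform $C^k$-bounds upgrade this to smooth convergence: if $f_t\not\to f_\infty$ in some $C^k$, then a subsequence $f_{t'_n}$ would stay $C^k$-bounded away from $f_\infty$, but by Arzel\`a--Ascoli together with the $C^{k+1}$-bound a sub-subsequence would converge in $C^k$ to a limit which the already-established $C^0$-convergence forces to equal $f_\infty$, a contradiction. The main subtlety is the construction of the barrier itself: the full Hessian of $\phi$ is degenerate along the $TM$-directions of the product, so $\phi$ is not convex in the usual sense. It is precisely the weaker notion of \emph{$m$-convexity} in Theorem \ref{barrier} --- a trace condition on $m$-frames rather than pointwise positive semi-definiteness --- that makes the argument go through, with the totally geodesic product-fibre structure of $\pi_N$ guaranteeing that the tangential degeneracy contributes exactly zero to that trace.
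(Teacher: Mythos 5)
Your proof is correct and follows essentially the same route as the paper: trap the flow near the limit point by applying the barrier theorem to a convex distance-type function composed with $\pi_N$, then upgrade the resulting $C^0$-convergence of the whole flow to smooth convergence via Arzel\`a--Ascoli and the uniform $C^k$-bounds. The only (harmless) difference is that you use the squared distance with a cutoff extension and arbitrary small radii, whereas the paper works with the distance function $\varrho_y$ on a shrinking family of geodesic balls $\mathscr{B}(y,r/j)$.
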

\begin{proof}
	Let $F_t:M\to M\times N$ be the mean curvature flow of $F_0:=\operatorname{Id}_M\times f_0$.
	Then $f_t=\pi_N\circ F_t\circ\varphi_t^{-1}$, where $\varphi_t=\pi_M\circ F_t$, and $\pi_M:M\times N\to M$, $\pi_N:M\times N\to N$ are the projections onto the factors. By assumption, there exist $y\in N$ and a sequence $\{t_n\}_{n\in\natural{}}$, with $\lim_{n\to\infty}t_n=\infty$, such that
	$$\lim_{n\to\infty}\operatorname{dist}_N\bigl(y,f_{t_n}(x)\bigr)=0\,\text { for all }x\in M,$$
	where $\operatorname{dist}_N$ denotes the distance function on $N$. Let $\mathscr{B}(y,r)$ be the geodesic ball of $N$ with radius $r$ centered at the point $y\in N$, and let $\varrho_y:\mathscr{B}(y,r)\to\real{}$ be the function given by
	$\varrho_y(z):=\operatorname{dist}_N(y,z).$
	For sufficiently small $r>0$, $\varrho_y$ is smooth and strictly convex on $\mathscr{B}(y,r)$. Since $M$ is compact, the sets $f_{t_n}(M)$ uniformly tend to $\{y\}$ as $n\to\infty$. Therefore, for any $j\in\natural{}$ there exists a sufficiently large time $t_{n_j}$ such that the image $f_{t_{n_j}}(M)$ is contained in the geodesic ball $\mathscr{B}(y,r/j)$. For a fixed $j$, define the compact set $C_j\subset N$ by 
	$C_j:=\overline{\mathscr{B}(y,r/j)}.$
	Then the function
	$\phi:=\varrho_y\circ\pi_N:M\times N\to\real{}$ given by $\phi(x,z)=\varrho_y(z)$,
	is smooth and convex on its sub-level set
	$$P^{r/j}:=M\times C_j=\{(x,z)\in P:=M\times N:\phi(x,z)\le r/j\}.$$
	Applying  the barrier theorem to $\phi$, we see that $F_t(M)\subset P^{r/j}$ for all $t\ge t_{n_j}$ which is equivalent to $f_t(M)\subset C_j$ for all $t\ge t_{n_j}$. This proves
	$$\lim_{t\to\infty}\operatorname{dist}_N\bigl(y,f_t(x)\bigr)=0\,\text { for all }x\in M,$$
	that is, $\{f_t\}_{t\in [0,\infty)}$ converges uniformly in $C^0(M,N)$ to the constant map $f_\infty:M\to N$, $f_\infty\equiv y$. Thus $\{f_t\}_{t\in[0,\infty)}$ is uniformly bounded in $C^k(M,N)$, for all $k\ge 0$. We claim that this implies 
	$$\lim_{t\to\infty}\Vert f_t\Vert_{C^k(M,N)}=0,\, \text { for all } k\ge 1.$$
	Indeed, if this does not hold, then there exist $k\ge 1$, $\varepsilon>0$ and a sequence $\{t_n\}_{n\in\natural{}}$ with $\lim_{t\to\infty}t_n=\infty$ such that
	$$\Vert f_{t_n}\Vert_{C^k(M,N)}\ge \varepsilon,\, \text { for all } n\in\natural{}.$$
	Since $\{f_{t_n}\}_{n\in\natural{}}$ is uniformly bounded in $C^k(M,N)$, for all $k\ge 0$, the Arzel\`a-Ascoli Theorem implies that there exists a subsequence $\{f_{t_{n_j}}\}_{j\in\natural{}}$ smoothly converging to a limit map 
	$f_{\ast}:M\to N.$
	But the same subsequence already converges in $C^0(M,N)$ to $f_\infty$, so the map $f_{\ast}$ must coincide with $f_\infty$. Thus
	$$\varepsilon\le \lim_{j\to\infty}\Vert f_{t_{n_j}}\Vert_{C^k(M,N)}=\Vert f_\infty\Vert_{C^k(M,N)}=0,$$
	because $Df_\infty=0$ and $k\ge 1$. This contradicts the choices of $k,\varepsilon$ and $\{t_n\}_{n\in\natural{}}$. This completes the proof.
\end{proof}

We will also need the following elementary lemma.
\begin{lemma}[Entire graph lemma]\label{graph}
	Let $f:\Omega\to\real{n}$ be a smooth map on an open domain $\Omega\subset\real{m}$ and  $C^1$-bounded. Then the graph $\varGamma_f$ is complete if and only if $f$ is entire, that is $\,\Omega=\real{m}$.
\end{lemma}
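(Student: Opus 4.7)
The plan is to reduce the completeness question on $\varGamma_f$ to the analogous question on $(\Omega,\gind_{\mathrm{euc}})$ via a metric comparison, and then to invoke the fact that a proper open subset of $\real{m}$ cannot be complete as a Riemannian manifold. I identify $\varGamma_f$ with $\Omega$ through the smooth embedding $F(x)=(x,f(x))$; the induced metric on $\Omega$ is $\gind=\gind_{\mathrm{euc}}+f^{*}\gind_{\mathrm{euc}}$, and the uniform bound $\vert df\vert_{\mathrm{euc}}\le C$ supplied by the $C^{1}$-boundedness of $f$ gives the pointwise two-sided estimate
$$\vert v\vert_{\mathrm{euc}}^{2}\;\le\;\gind(v,v)\;\le\;(1+C^{2})\,\vert v\vert_{\mathrm{euc}}^{2}\qquad\text{for every }v\in T\Omega.$$
Integrating along curves, the $\gind$-length and Euclidean length of any piecewise smooth curve in $\Omega$ are comparable by factors between $1$ and $\sqrt{1+C^{2}}$. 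In particular, $\gind$ and $\gind_{\mathrm{euc}}$ induce the same topology on $\Omega$, and a sequence in $\Omega$ is $\gind$-Cauchy if and only if it is Cauchy for the restricted Euclidean distance.

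For the direction $\Omega=\real{m}\Rightarrow\varGamma_f$ complete, the standard Euclidean metric on $\real{m}$ is complete, so any $\gind$-Cauchy sequence is Euclidean-Cauchy by the lower bound, converges to some $x\in\real{m}=\Omega$ in the Euclidean sense, and the upper bound forces $\gind$-convergence to the same point.

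For the converse I argue contrapositively. If $\Omega\subsetneq\real{m}$, pick $x_{0}\in\Omega$ and $y\in\real{m}\setminus\Omega$, and consider the Euclidean segment $\gamma(t)=x_{0}+t(y-x_{0})$, $t\in[0,1]$. Setting $T:=\inf\{t\in[0,1]:\gamma(t)\notin\Omega\}$, the openness of $\Omega$ combined with the continuity of $\gamma$ yield $T\in(0,1]$, $\gamma([0,T))\subset\Omega$, and $\gamma(T)\in\partial\Omega$. The curve $\gamma|_{[0,T)}$ has finite Euclidean length, hence finite $\gind$-length by the upper bound above; consequently, for any $t_{n}\nearrow T$ the sequence $\{\gamma(t_{n})\}\subset\Omega$ is $\gind$-Cauchy, while its Euclidean limit $\gamma(T)$ lies outside $\Omega$. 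Since the $\gind$-topology agrees with the Euclidean subspace topology on $\Omega$, this sequence has no $\gind$-limit in $\Omega$, so $\varGamma_f$ is not complete. I do not foresee any real obstacle; the argument is purely a two-line metric comparison, followed by the standard observation that a proper open subset of $\real{m}$ fails to be closed in $\real{m}$ and therefore cannot be intrinsically complete.
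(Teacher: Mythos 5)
Your argument is correct and follows essentially the same route as the paper: the $C^1$-bound yields a two-sided comparison of lengths (hence of induced distances) between $\gind$ and the Euclidean metric, reducing completeness of $\varGamma_f$ to completeness of $\Omega$, which for an open domain forces $\Omega=\real{m}$; your explicit boundary-segment construction is just a hands-on proof of the fact the paper quotes for $(\Omega,d_{\e})$. The only cosmetic caveat is the parenthetical claim that a sequence is $\gind$-Cauchy if and only if it is Cauchy for the restricted (chordal) Euclidean distance --- for non-convex $\Omega$ only the forward implication holds, since the comparison controls the intrinsic rather than the chordal distance --- but your proof never uses the reverse implication, so nothing breaks.
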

\begin{proof} Consider the two metric spaces $(\Omega, d_{\e})$ and $(\varGamma_f,d_{\gind})$, where $d_{\e}$ denotes the euclidean distance function, and $d_{\gind}$ is the distance function on the graph $\varGamma_f$, induced by its Riemannian metric $\gind$. By the Hopf-Rinow theorem, the metric space $(\varGamma_f,d_{\gind})$ is complete if and only if $(\varGamma_f,\gind)$ is a complete Riemannian manifold. Moreover, since $\Omega$ is an open domain the metric space $(\Omega, d_{\e})$ is complete if and only if $\Omega=\real{m}$. Therefore, it suffices to prove that the metric space $(\Omega, d_{\e})$ is complete if and only if $(\varGamma_f,d_{\gind})$ is complete. The map $F:=\operatorname{Id}_\Omega\times f:\Omega\to\varGamma_f$ provides a homeomorphism between these metric spaces, its inverse is the projection $\pi:\varGamma_f\to\Omega$. Any smooth curve $c:[0,1]\to\Omega$ can be lifted to a smooth curve $\gamma_c:=F\circ c$ on $\varGamma_f$. From $|\gamma'_c(t)|^2=|c'(t)|^2+|df_{c(t)}(c'(t))|^2$ and from the $C^1$-boundedness of $f$ we conclude the existence of a constant $a>0$, independent of $c$, such that
	$$|c'(t)|^2\le|\gamma'_c(t)|^2\le a^2|c'(t)|^2.$$
	Integrating on the interval $[0,1]$, we see that the lengths of $c$ and $F\circ c$
	satisfy
	\begin{equation*}
	L(c)\le L(F\circ c)\le a L(c).\label{estimate1}
	\end{equation*}
	Taking the infimum over all curves connecting 
	$x_1,x_2\in \Omega$, we conclude that
	\begin{equation*}\label{equivmet}
	d_{\e}(x_1,x_2)\le d_{\gind} (F(x_1),F(x_2))\le a\, d_{\e}(x_1,x_2).
	\end{equation*}
	Thus, $F$ and $\pi$ map Cauchy sequences in $(\Omega,d_{\e})$ to Cauchy sequences in $(\varGamma_f,d_{\gind})$ and vice versa. Since $F$ is a homeomorphism, this shows that the completeness of  $(\Omega,d_{\e})$ and $(\varGamma_f,d_{\gind})$ is equivalent.  
\end{proof}
\section{Proofs of the main results}\label{proofs}
\noindent In this section, we will prove Theorems \ref{main} and \ref{minimal}. We need to recall the blow-up analysis of singularities; for example see \cite{chen}.

\begin{proposition}\label{blow}
Let $M$ be a compact $m$-dimensional manifold and let
$F:M\times [0,T)\to P$ be a solution of the mean curvature 
flow \eqref{mcf}, where $(P,\gp)$ is a $p$-dimensional Riemannian manifold with bounded geometry, and $T\le\infty$ its maximal time of existence. Suppose that there exists $x_\infty\in M$, and a sequence 
$\{(x_j,t_j)\}_{j\in\mathbb{N}}$
in $M\times[0,T)$ with $\lim x_j=x_\infty$, $\lim t_j=T$, such that
$$|A(x_j,t_j)|=\max_{(x,t)\in M\times[0, t_j]}|A(x,t)|=:a_j\to\infty.$$
Consider the family of maps
$F_j:M\times[L_j,R_j)
\to(P,a^2_j{\gp})$, $j\in\mathbb{N}$,
given by
$$F_j(x,s):=F_{j,s}(x):=F(x,{s}/{a^{2}_j}+t_j),$$
where
$$L_j:=-a_j^2t_j\quad\text { and }\quad R_j:=\begin{cases}a_j^2(T-t_j)&\text {, if }T<\infty\\
\infty&\text {, if }T=\infty.\end{cases}$$
Then the following hold:
\begin{enumerate}
\item[\textnormal{(a)}]
For each $j\in\natural{}$, the maps $\{F_{j,s}\}_{s\in[L_j,R_j)}$ evolves by mean curvature flow in time $s$. The second fundamental forms $A_{j}$ of $F_{j}$ satisfy
$$
\hspace{+40pt}A_j(x,s)= A(x,s/a_j^2+t_j)\,\,\text{and}\,\,|A_j(x,s)|=a_j^{-1}|A(x,s/a_j^2+t_j)|,
$$
Hence, for any $s\le 0$, $j\in\mathbb{N}$, we have $|A_j|\le 1$ and $|A_j(x_{j},0)|=1.$\footnote{Norms are with respect to the metrics induced by the corresponding immersions.}
\smallskip
\item[\textnormal{(b)}]
For any fixed $s\le 0$, the sequence  $\{(M,F_{j,s}^*(a^2_j{\gp}),x_j)\}_{j\in\mathbb{N}}$ of pointed manifolds smoothly subconverges in the Cheeger-Gromov sense to a connected complete pointed manifold $(M_{\infty},\gind_{\infty}(s),x_{\infty})$, where $M_{\infty}$ is independent of $s$. Moreover, $\{(P,a^2_j{\gp},F_j(x_j,s))\}_{j\in\mathbb{N}}$ smoothly subconverges to the standard euclidean space $(\mathbb{R}^{p},\gind_{\operatorname{euc}},0)$.
\smallskip
\item[\textnormal{(c)}]
There is an ancient solution $F_{\infty}\colon M_{\infty}\times(-\infty,0]\to\mathbb{R}^p$ of \eqref{mcf} such that, for each $s\le 0$, $\{F_{j,s}\}_{j\in\mathbb{N}}$ smoothly subconverges in the Cheeger-Gromov sense to $F_{\infty, s}$. This convergence is uniform with respect to $s$. Additionally, $|A_{F_{\infty}}|\le 1$ and $|A_{F_{\infty}}(x_{\infty},0)|=1.$
\smallskip
\item[\textnormal{(d)}]
If $T=\infty$, then $R_j=\infty$. If $T<\infty$ and the singularity is of type-II, then $R_j\to\infty$.  In both cases, $F_\infty$ can be constructed on  $(-\infty,\infty)$, and gives an eternal solution of \eqref{mcf}.
\end{enumerate}
\end{proposition}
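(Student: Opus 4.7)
The strategy is Hamilton's parabolic blow-up procedure adapted to mean curvature flow in a Riemannian target with bounded geometry. I would prove the four parts in order, each building on the previous one.

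For part (a), I would verify by direct computation that parabolic rescaling preserves the mean curvature flow equation: if one defines a new time variable $s=a_j^2(t-t_j)$ and uses the rescaled target metric $a_j^2\gp$, then by the chain rule
\begin{equation*}
\partial_s F_j(x,s)=a_j^{-2}\partial_t F(x,s/a_j^2+t_j)=a_j^{-2}H(x,s/a_j^2+t_j),
\end{equation*}
while the mean curvature vector of $F_{j,s}$ computed with respect to $a_j^2\gp$ is precisely $a_j^{-2}$ times the mean curvature in $\gp$. The scaling identity $|A_j|=a_j^{-1}|A|$ follows from the fact that $A$ is a normal-valued $(0,2)$-tensor and inner products change by $a_j^2$ under the rescaling, and the pointwise bounds $|A_j|\le 1$ with equality at $(x_j,0)$ are immediate from the choice of $a_j$.

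For the ambient convergence in part (b), I would use the bounded geometry hypothesis on $(P,\gp)$: all covariant derivatives of the curvature are uniformly bounded and $\operatorname{inj}(P)>0$. Rescaling by $a_j^2$ divides sectional curvatures by $a_j^2$ and multiplies the injectivity radius by $a_j$, so any $g_P$-geodesic ball of radius $r/a_j$ around $F_j(x_j,0)$ becomes a ball of radius $r$ in $(P,a_j^2\gp)$ with curvatures of order $a_j^{-2}$ and injectivity radius tending to infinity. The Cheeger-Gromov compactness theorem, combined with the rigidity that a complete pointed Riemannian manifold of vanishing curvature and infinite injectivity radius is Euclidean space, produces the limit $(\real{p},\gind_{\e},0)$. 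For the intrinsic convergence, the uniform bound $|A_j|\le 1$ coupled with the rescaled ambient bounded geometry allows one to apply Shi-type estimates derived from the evolution equation of the second fundamental form under mean curvature flow (as in the first-order Bochner formulas for $A$), yielding uniform bounds $|\nabla^k A_j|\le C_k$ on compact time sub-intervals of $(-\infty,0]$. These bounds, combined with an injectivity radius lower bound for the pulled-back metric $F_{j,s}^*(a_j^2\gp)$, give Cheeger-Gromov subconvergence to a pointed complete manifold $(M_\infty,\gind_\infty(s),x_\infty)$.

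For part (c), the uniform derivative bounds on $A_j$ and the ambient convergence together let me invoke Hamilton's compactness theorem for geometric flows (in the mean curvature flow version, see e.g.\ Chen-He or Cooper): a subsequence of the immersions $F_{j,s}$ converges smoothly, uniformly on compact subsets of $M_\infty\times(-\infty,0]$, to an immersion $F_\infty\colon M_\infty\times(-\infty,0]\to\real{p}$ that solves \eqref{mcf}. The pointwise bounds on $A_{F_\infty}$ follow by passage to the limit. Finally, for part (d), whenever $T=\infty$ the interval $[L_j,R_j)=[-a_j^2t_j,\infty)$ already exhausts $(-\infty,\infty)$; when $T<\infty$ and the singularity is of type-II, the hypothesis $\sup_{[0,T-\varepsilon]}(T-t)|A|^2\to\infty$ for every $\varepsilon>0$ forces $R_j=a_j^2(T-t_j)\to\infty$ by a standard point-picking argument, and the same compactness argument extends $F_\infty$ to an eternal solution. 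The main obstacle I expect is the uniform injectivity radius lower bound for $F_{j,s}^*(a_j^2\gp)$: the intrinsic curvature bound alone is not enough, and one must combine the bound $|A_j|\le 1$ with a short-loop argument in the ambient space, using that any geodesic loop in $M_\infty$ would produce, via the immersion, a loop of length of order one in a nearly Euclidean ball of $(P,a_j^2\gp)$, which is incompatible with the ambient convergence to $\real{p}$.
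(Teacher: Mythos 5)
The paper does not prove this proposition at all: it is recalled as known background, with the reader referred to Chen--He \cite{chen} for the blow-up analysis, so there is no in-paper proof to compare against. Your sketch follows exactly the standard route taken in that literature (parabolic rescaling preserves \eqref{mcf}, the scaling identities for $A$, Cheeger--Gromov convergence of the rescaled ambient space to $(\real{p},\gind_{\operatorname{euc}},0)$ by bounded geometry, interior Shi-type estimates for $|\nabla^kA_j|$, and a Hamilton-type compactness theorem for the immersions), and parts (a), (b), (c) are essentially correct as outlined. Two points deserve sharpening. First, your closing ``short-loop'' argument for the injectivity radius of the induced metrics is not quite a proof as phrased: a loop of length of order one inside a nearly Euclidean ball is not by itself incompatible with anything; the correct mechanism is that a closed geodesic of the submanifold has ambient curvature bounded by $|A_j|\le 1$, so a Fenchel-type total curvature bound (total curvature at least $2\pi$) forces its length to be bounded below, and combined with the Gauss-equation sectional curvature bound, Klingenberg's lemma gives the injectivity radius bound; alternatively one avoids the issue entirely by building the limit through local graph representations as in Langer/White-type compactness, which is how \cite{chen} proceeds. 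Second, in part (d) the conclusion $R_j=a_j^2(T-t_j)\to\infty$ in the type-II case does not follow for an arbitrary sequence satisfying the hypotheses; as you implicitly acknowledge by invoking ``point-picking,'' one must choose (or pass to) a Hamilton-type sequence of space-time points for which $(T-t_j)a_j^2\to\infty$, which is the standard convention behind the statement.
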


\smallskip
\subsection{Proofs of Theorem \ref{main} and Theorem \ref{minimal}}~

We are now ready to prove our main results starting with Theorem \ref{minimal}.

\smallskip

\noindent\textbf{Proof of Theorem \ref{minimal}.}
Suppose $f:M\to N$ is a smooth and strictly area decreasing minimal map. Since $\pind>0$, $H=0$, and $\dt\pind=0$ we may use the evolution equation \eqref{p8} of $\pind$ in Lemma \ref{evolpind} to conclude
\begin{eqnarray}\label{p9} 
\Delta\pind -\frac{1}{2\pind}\vert\nabla\pind\vert^2+2\pind\vert A\vert^2 +\mathcal{Q}\le 0.
\end{eqnarray}
From the conditions \eqref{curvature main} and \eqref{curvature b}, $\mathcal{Q}$ in \eqref{p9} is non-negative. Hence
$$\Delta\sqrt{\pind}+\sqrt{\pind}\vert A\vert^2=\frac{1}{2\sqrt{\pind}}\Bigl(\Delta\pind-\frac{1}{2\pind}\vert\nabla\pind\vert^2+2\pind\vert A\vert^2\Bigr)\le 0.$$
Integration gives $\vert A\vert^2=0$ and therefore $f$ must be totally geodesic. Once we know that $f$ is totally geodesic, equation \eqref{gradpind} shows that $\nabla\sind=0$ and hence the singular values $\lambda,\mu$ must be constant functions on $M$, in particular $\pind $ is constant. This proves the first part of the theorem.

If $\rank(df)=0$ then clearly $f$ must be constant and $\lambda=\mu=0$. Suppose now that
$\rank(df)>0$. Once we know that $f$ is totally geodesic, equation \eqref{p9} implies $\mathcal{Q}=0$. Therefore, from \eqref{curvature main}, \eqref{curvature b},  $\pind>0 $, and from the definition of $\mathcal{Q}$ we obtain the following equations:
	\begin{eqnarray}
	0&=&\lambda^2\mu^2\big(\BR_M(\alpha_1,\alpha_2)-\sigma_N\big),\label{q1}\\
	0&=&\lambda^2\operatorname{Ric}_M(\alpha_1,\alpha_1),\label{q2}\\
	0&=&\mu^2\operatorname{Ric}_M(\alpha_2,\alpha_2).\label{q3}
	\end{eqnarray}
	
	We claim that $\mathcal{V}:=\operatorname{ker}df$ and $\mathcal{H}:=\bigl(\ker df\bigr)^\perp$ are parallel distributions on $M$, where $\mathcal{H}$ is the horizontal distribution given by the orthogonal complement of $\mathcal{V}$ with respect to the graphical metric $\gind$ on $M$.  The distributions are certainly smooth since at each point $x\in M$, the fiber $\mathcal{V}_x$ is the kernel of the smooth bilinear form $\sind-\gind$ and the nullity of $\sind-\gind$ is fixed, because the eigenvalues of $\sind$ are constant. Since the second fundamental form $A$ vanishes, equation \eqref{sec formula} shows that the Levi-Civita connections of $\gm$ and $\gind$ coincide, that is
\begin{equation}\label{conn}
\nabla_vw=\nabla^{\gm}_vw,\,\text { for all } v,w\in TM.
\end{equation} 
In particular, the geodesics on $M$ with respect to these metrics coincide. Moreover, again by equation \eqref{sec formula}, we get
\begin{equation}\label{par k}
\nabla_vw=\nabla^{\gm}_vw\in\Gamma(\mathcal{V}),\text { for all }v\in TM\text { and }w\in\Gamma(\mathcal{V}).
\end{equation} 
Using the fact that the connections are metric with respect to $\gind$ and $\gm$, and that the two distributions are orthogonal to each other with respect to $\gind$, we see that in addition 
\begin{equation}\label{par kp}
\nabla_vw=\nabla^{\gm}_vw\in\Gamma(\mathcal{H}),\text { for all }v\in TM\text { and }w\in\Gamma(\mathcal{H}).
\end{equation}
Equations \eqref{par k} and \eqref{par kp} imply that the distributions are parallel and involutive. Therefore, by Frobenius' Theorem, for each $x\in M$ there exist unique integral leaves $V_x$ of $\mathcal{V}$ and $H_x$ of $\mathcal{H}$. Since the distributions are parallel and orthogonal to each other,  $V_x$ and $H_x$ are complete and totally geodesic submanifolds of $M$, intersecting orthogonally in $x$. Since $M$ is compact and the integral leaves $V_x$ are the pre-images $K_y$ of points $y\in f(M)$, $V_x$ must be closed and embedded. Thus, $(M,\gm)$ is locally isometric to the Riemannian product of two manifolds $(L,\gind_{L})$ and $(K,\gind_K)$ of non-negative Ricci curvature, and $f:M\to f(M)$ is a submersion. The set $f(M)$ is compact, because $M$ is compact and $f$ continuous. Therefore, if $\rank(df)=1$, then $\upgamma$ must be a closed $1$-dimensional submanifold of $N$, and because $f$ is totally geodesic, this curve must be a geodesic. If $\rank(df)=2$, then $f(M)$ must coincide with $N$, because submersions are open maps and $N$ is connected\footnote{In this article we assume manifolds are connected.}. 

\medskip
\noindent{\bf Claim:} {\em If $\rank(df)=2$, then the horizontal leaves and $(N,\gn)$ are flat and the
surface $N$ is diffeomorphic to a torus $\mathbb{T}^2$ or to a Klein bottle $\mathbb{T}^2/\mathbb{Z}_2$.}
\medskip

\noindent
\textit{Proof of the claim.} Since $(M,\gm)$ is locally a product manifold, the tangent vectors $\alpha_1,\alpha_2$ in the singular value decomposition are given by horizontal vectors. From \eqref{q1}-\eqref{q3} we get
$$\Ric_M(\alpha_1,\alpha_1)=\Ric_M(\alpha_2,\alpha_2)=\sigma_M(\alpha_1\wedge\alpha_2)=0.$$
Since $\mathcal{H}$ is $2$-dimensional and totally geodesic, it is flat. To see that $(N,\gn)$ is flat, we use equation  \eqref{q1} again, and get $\sigma_N\circ f=0$. As we have already seen, $f(M)=N$. Therefore $\sigma_N\equiv 0$. This proves the claim. Since $\gn$ is flat and $N$ is compact, we conclude that $N$ is diffeomorphic to a torus $\mathbb{T}^2$ or to a Klein bottle $\mathbb{T}^2/\mathbb{Z}_2$. \hfill{$\circledast$}

\smallskip\noindent
In particular, this proves that the kernel of the Ricci operator is non-trivial, because $M$ splits locally into the Riemannian product of the fibers and the horizontal leaves.

\smallskip\noindent
If $\rank(df)=1$, and $\alpha$ is a horizontal vector of unit length, then by definition of the singular values $df(\alpha)=\lambda\, \beta$, for a unit tangent vector $\beta$ to the curve $\upgamma$. Thus, if we equip $\upgamma$ with the metric $\lambda^{-2}\gind_{\upgamma}$, then $df$ becomes an isometry. This proves that $f(M,\gm)\to(\gamma,\lambda^{-2}\gind_{\upgamma})$ is a Riemannian submersion. In the same way we see that the map $f:(M,\gm)\to(N,\lambda^{-2}\gn)$ is a Riemannian submersion, if $\lambda=\mu$. 

\smallskip\noindent
It remains to show that the Euler characteristic of $M$ vanishes. Any vector field $W\in\mathfrak{X}(f(M))$ can be lifted in a unique way to a smooth horizontal vector field $\alpha\in\Gamma(\mathcal{H})$ on $M$, that is $df(\alpha)=W\circ f$. In particular, if $W$ is a non-vanishing vector field, then $\alpha$ is non-vanishing since $f$ is a submersion and $\alpha\in\Gamma(\mathcal{H})$. The image $f(M)$ is diffeomorphic to either of $\mathbb{S}^1$, $\mathbb{T}^2$ or  $\mathbb{T}^2/\mathbb{Z}_2$, and there exist non-vanishing vector fields on these target manifolds. Thus, we obtain non-vanishing horizontal vector fields on $M$. By the Poincar\'e-Hopf Theorem this shows that the Euler characteristic $\chi(M)$ vanishes.

This finishes the proof of Theorem \ref{minimal}.\hfill{$\square$}

\smallskip

\noindent\textbf{Proof of Theorem \ref{main}.}
	(a) We already know from Lemma \ref{main lemma} that the flow remains graphical as long as it exists and that all maps $f_t$, $t\in[0,T)$, stay strictly area decreasing. Thus, it remains to show that the maximal time of existence $T$ is infinite. Suppose by contradiction that $T<\infty$. Hence there exists a sequence $\{(x_j,t_j)\}_{j\in\natural{}}$ in $M\times[0,T)$ such that
	$$
	\lim t_j=T,\quad a_j=\max_{(x,t)\in M\times[0,t_j]}|A|(x,t)=|A(x_j,t_j)|\quad\text{and}\quad\lim a_j=\infty.
	$$
	Let $F_j: M\times [-a_j ^2 t_j,0]\to(M\times N,a_j ^2 (g_M\times g_N))$, $j\in\natural{}$,
	be the family of graphs of the maps
	$$f_{s/a_j^2+t_j}:M\to N,\quad s\in[-a_j^2t_j,0].$$
	The singular values of $f_{s/a_j^2+t_j}$, considered as a map between the Riemannian manifolds $(M,a_j ^2\gind_M)$ and $(N,a_j ^2\gind_N)$, coincide with the singular values of the same map, considered as a map between the Riemannian manifolds $(M,\gind_M)$ and $(N,\gind_N)$,	for any $j\in\natural{}$ and any $s\in[-a_j ^2 t_j,0]$.
	Moreover, the mean curvature vector $H_j$ of $F_j$ is related to the mean curvature vector $H$ of $F$ by
	$$
	H_j(x,s)=a_j^{-2}H(x,s/a_j^2+t_j),
	$$
	for any $(x,s)\in M\times[-a_j^2t_j,0].$
	
	Since we assume $T<\infty$, the estimate in \eqref{est mean2} implies that the norm of the mean curvature vector $|H|$ is uniformly bounded in time and since the convergence in Proposition \ref{blow}(c) is smooth, it follows that the ancient solution $F_\infty:M_\infty\to\R^m\times\R^2$ given in Proposition \ref{blow}(c) is a non-totally geodesic complete minimal immersion. From Lemma \ref{main lemma}, it follows that the singular values of $f_t$ remain uniformly bounded as $t\to T$. Then Lemma \ref{graph} implies that $M_\infty=\R^m$.  Hence, $F_\infty:\R^m\to\R^{m+2}$ is an entire minimal strictly area decreasing graph in $\R^{m+2}$ that is uniformly bounded in $C^1$. Due to the Bernstein type result in \cite[Theorem 1.1]{assimos} we obtain that the immersion $F_\infty:\R^m\to\R^{m+2}$ is totally geodesic; see also \cite[Theorem 1.1]{wang}. This contradicts Theorem \ref{blow}(c). Consequently, the maximal time $T$ of existence of the flow must be infinite. This proves Theorem \ref{main}(a).\hfill{$\circledast$}

\smallskip

\noindent(b) Since $\Ric_M\ge 0$, the constant $\varepsilon_0$ in inequality \eqref{est p} is non-negative and therefore $\{f_t\}_{t\in[0,\infty)}$ remains uniformly strictly area decreasing and uniformly bounded in $C^1(M,N)$. This proves part (b) of Theorem \ref{main}.\hfill{$\circledast$}

\smallskip

\noindent (c) The uniform bound on the mean curvature follows directly from \eqref{est mean2}. On the other hand, a uniform $C^2$-bound in the mean curvature flow implies uniform $C^k$-bounds for all $k\ge 2$, if $N$ is complete with bounded geometry. To obtain a uniform $C^2$-bound we need to show that the norm $\vert A\vert$ of the second fundamental form stays uniformly bounded in time. We may then argue in the same way as in part (a) of the proof to derive a contradiction, if $\limsup_{t\to\infty}\vert A\vert=\infty$. We need the uniform $C^1$-bound to apply the entire graph lemma and the Bernstein theorem in \cite{assimos}. This proves part (c).\hfill{$\circledast$}.

\smallskip

\noindent (d) It remains now to prove the last part of Theorem \ref{main}.
\begin{enumerate}[(1)]
	\item This follows from combining (b) and (c).
	\smallskip
	\item We show that for all cases listed in (2) there exists a compact subset $C\subset N$ such that $f_t(M)\subset C$ for all $t$.
	
	\medskip
		\begin{enumerate}[\normalfont(i)]
			\item $\Ric_M>0$.
			From estimate \eqref{est p} in Lemma \ref{main lemma}, it follows that there exist positive constants $c_1,\varepsilon_0$ so that
			$\vert\df_t\vert^2_{\gm}\le c_1e^{-\varepsilon_0t}$, for any $t\ge 0.$
			Clearly $\lim_{t\to\infty}|\df_t|_{\gm}= 0$. Fix a time $t$, take a geodesic $\gamma:[0,1]\to (M,\gm)$ connecting $x,y\in M$, and let  $\varphi:=f_t\circ\gamma$. Thus in terms of the length $L(\gamma)$ of $\gamma$ we get
			\begin{eqnarray*}
				&&\operatorname{dist}_N\big(f_t(x),f_t(y)\big)\le\int_0^1|\varphi'(s)|ds=\int_0^1|(f_t\circ\gamma)'(s)|ds\\
				&&\quad\quad\quad\le\int_0^1|(\df_t)_{\varphi(s)}|_{\gm}|\gamma'(s)|ds=L(\gamma) \int_0^1|(\df_t)_{\varphi(s)}|_{\gm}ds\\
				&&\quad\quad\quad\le L(\gamma)\sqrt{c_1}e^{-\varepsilon_0t/2}.
			\end{eqnarray*}
			Therefore,  $\lim(\operatorname{diam}(f_t(M)))=0$.
			Let $\mathscr{B}(q,r)$ be the geodesic ball of $N$ with radius $r$ centered at a point $q\in N$ and let
			$\varrho_q(y):=\operatorname{dist}_N(q,y),$
			for any $y\in\mathscr{B}(q,r)$.
			Since $N$ has bounded geometry,  there exists a positive constant $r_0<\operatorname{inj}_{\gn}(N)$ depending only on $(N,\gn)$, such that $\varrho_q$ is smooth and strictly convex on $\mathscr{B}(q,r_0)$ for all $q\in N$. Since the diameters of $f_t(M)$ shrink to zero, there exists a sufficiently large time $t_0$ such that 
			$f_{t_0}(M)$ is contained in a geodesic ball
			$\mathscr{B}(p,r_0)$. We may now proceed exactly as in the proof of Lemma \ref{lemma convergence} to show that
			$f_t(M)\subset C:=\overline{\mathscr{B}(p,r_0)},$
			for $t\ge t_0$.
			\smallskip
			\item[(ii)] $N$ is compact. Choose $C:=N$.
			\smallskip
			\item[(iii)] $N$ is diffeomorphic to $\real{2}$. Since the curvature of $N$ is non-positive, the distance function $\varrho_p:N\to\real{}$ to any fixed point $p\in N$ is globally smooth and convex; see \cite[Theorem 4.1]{bishop}. Similarly as in (i), we choose
			$\phi:= \varrho_p\circ\pi_N:M\times N\to \real{}$
			as a globally defined convex function on $P$, and apply Theorem \ref{barrier} to $\phi$ and the set
			$C:=\overline{\mathscr{B}(p,r)},$
			where $r>0$ is chosen so large that $f_0(M)\subset C$. This yields that $f_t(M)\subset C$ for all $t$.
			\smallskip
			\item[(iv)] $N$ is complete and contains a totally convex subset $\mathscr{C}$. In this case, the distance function $\varrho_{\mathscr{C}}(q):=d_N(\mathscr{C},q)$ is globally convex (see \cite[Remarks 4.3(1)]{bishop}). We can proceed as in (iii) with $\phi:=\varrho_{\mathscr{C}}\circ\pi_N$ and the compact set $C\subset N$ chosen as the closure of a sub-level set of $\varrho_{\mathscr{C}}$ that contains $f_0(M)$, yielding $f_t(M)\subset C$ for all $t$.
			\medskip
			\item[(v)] We proceed as in (iv) with $C:=\overline{N^c}$ and $\phi:=\psi\circ\pi_N$.
		\end{enumerate}
	\medskip
This completes the proof of part (2) of (d).
\smallskip
\item
\begin{enumerate}[(i)]
	\item The volume measure $d\mu$ on $\varGamma_f$ evolves by
$\partial_t d\mu=- |H|^2 d\mu.$
	By integration we get
	$$\int_{0} ^{\infty} \left(\int_{M} |H|^2 d\mu\right) dt<\infty.$$
	Hence, there exists a sequence $\{t_n\}_{n\in\mathbb{N}}$, $\lim_{n\to\infty}t_n=\infty$, such that
	\begin{equation}\label{eq minimal}
	\lim_{n\to\infty} \left.\int_{M} |H|^2 d\mu\right\vert_{t=t_n}=0.
	\end{equation}
	Because $\{f_{t_n}\}_{n\in\natural{}}$ is uniformly bounded in $C^k(M,N)$, $k\ge 0$, there exists a subsequence that smoothly converges to a limit map $f_\infty$. By \eqref{eq minimal}, this limit map must be minimal. 
	\smallskip
	\item This follows from Lemma \ref{lemma convergence}.
	\smallskip
	\item This follows from (i) and Corollary \ref{cor b}.
	\smallskip
	\item Assume that $(M,\gm)$, $(N,\gn)$ are real analytic. Since  $\{f_t\}_{t\in[0,\infty)}$ contains a subsequence $\{f_{t_n}\}_{n\in\natural{}}$ that smoothly converges to a totally geodesic map $f_\infty$, a deep result of Leon Simon \cite{simon} shows that the family $\{f_t\}_{t\in[0,\infty)}$ converges smoothly and uniformly to $f_{\infty}$.
\end{enumerate}
\end{enumerate}
This completes the proof of part (3)(iv) and of Theorem \ref{main}.\hfill{$\square$}

\begin{bibdiv}
\begin{biblist}


\bib{assimos}{article}{
	author={Assimos, R.},
	author={Jost, J.},
	title={The geometry of maximum principles and a Bernstein theorem in codimension 2},
	journal={arXiv:1811.09869},
	date={2018},
	pages={1--27},
}


\bib{berrick}{article}{
	author={Berrick, A. J.},
	author={Cohen, F. R.},
	author={Wong, Y. L.},
	author={Wu, J.},
	title={Configurations, braids, and homotopy groups},
   journal={J. Amer. Math. Soc.},
   volume={19},
   date={2006},
   number={2},
   pages={265--326},
}


\bib{bishop}{article}{
	author={Bishop, R. L.},
	author={O'Neill, B.},
	title={Manifolds of negative curvature},
	journal={Trans. Amer. Math. Soc.},
	volume={145},
	date={1969},
	pages={1--49},
}

\bib{cha1}{article}{
	author={Chau, A.},
	author={Chen, J.},
	author={He, W.},
	title={Lagrangian mean curvature flow for entire Lipschitz graphs},
	journal={Calc. Var. Partial Differential Equations},
	volume={44},
	date={2012},
	pages={199--220},
}
%

\bib{chen}{article}{
	author={Chen, J.},
	author={He, W.},
	title={A note on singular time of mean curvature flow},
	journal={Math. Z.},
	volume={266},
	date={2010},
	number={4},
	pages={921--931},
}

\bib{curtis}{article}{
	author={Curtis, E. B.},
	title={Some nonzero homotopy groups of spheres},
   journal={Bull. Amer. Math. Soc.},
   volume={75},
   date={1969},
   pages={541--544},
}

\bib{ecker}{article}{
	author={Ecker, K.},
	author={Huisken, G.},
	title={Mean curvature evolution of entire graphs},
	journal={Ann. of Math. (2)},
	volume={130},
	date={1989},
	number={3},
	pages={453--471},
}

\bib{gray}{article}{
	author={Gray, B.},
	title={Unstable families related to the image of $J$},
   journal={Math. Proc. Cambridge Philos. Soc.},
   volume={96},
   date={1984},
   number={1},
   pages={95--113},
}





\bib{lubbe1}{article}{
	author = {Lubbe, F.},
	title = {Mean curvature flow of contractions between Euclidean spaces},
	journal = {Calc. Var. Partial Differ. Equ.},
	volume = {55},
	pages = {Article: 104, pages 28},
	date = {2016},
}


\bib{lotay}{article}{
	author={Lotay, J. D.},
	author={Schulze, F.},
	title={Consequences of strong stability of minimal submanifolds},
   journal={Int. Math. Res. Not. IMRN},
   date={2020},
   number={8},
   pages={2352--2360},
}

\bib{markellos}{article}{
	author={Markellos, M.},
	author={Savas-Halilaj, A.},
	title={Rigidity of the Hopf fibration},
	journal={Calc. Var. Partial Differential Equations},
	volume={60},
	date={2021},
	number={5},
	pages={Paper No. 171, 34},
}



\bib{ss3}{article}{
	author={Savas-Halilaj, A.},
	author={Smoczyk, K.},
	title={Bernstein theorems for length and area decreasing minimal maps},
	journal={Calc. Var. Partial Differential Equations},
	volume={50},
	date={2014},
	pages={549-577},
}
\bib{ss2}{article}{
	author={Savas-Halilaj, A.},
	author={Smoczyk, K.},
	title={Homotopy of area decreasing maps by mean curvature flow},
	journal={Adv. Math.},
	volume={255},
	date={2014},
	pages={455--473},
}
\bib{ss1}{article}{
	author={Savas-Halilaj, A.},
	author={Smoczyk, K.},
	title={Evolution of contractions by mean curvature flow},
	journal={Math. Ann.},
	volume={361},
	date={2015},
	number={3-4},
	pages={725--740},
}
\bib{ss0}{article}{
	author={Savas-Halilaj, A.},
	author={Smoczyk, K.},
	title={Mean curvature flow of area decreasing maps between Riemann surfaces},
	journal={Ann. Global Anal. Geom.},
	volume={53},
	date={2018},
	number={1},
	pages={11--37},
}

\bib{shen}{article}{
   author={Shen, Y.},
   author={Ye, R.},
   title={On stable minimal surfaces in manifolds of positive bi-Ricci
   curvatures},
   journal={Duke Math. J.},
   volume={85},
   date={1996},
   number={1},
   pages={109--116},
}

\bib{simon}{article}{
	author={Simon, L.},
	title={Asymptotics for a class of nonlinear evolution equations, with applications to geometric problems},
	journal={Ann. of Math. (2)},
	volume={118},
	date={1983},
	number={3},
	pages={525--571},
}


\bib{smoczyk1}{article}{
	author={Smoczyk, K.},
	title={Mean curvature flow in higher codimension - Introduction and survey},
	journal={Global Differential Geometry,  Springer Proceedings in Mathematics},
	volume={12},
	date={2012},
	pages={231--274},
}

\bib{stw}{article}{
	author={Smoczyk, K.},
	author={Tsui, M.-P.},
	author={Wang, M.-T.},
	title={Curvature decay estimates of graphical mean curvature flow in higher codimensions},
	journal={Trans. Amer. Math. Soc.},
	volume={368},
	date={2016},
	number={11},
	pages={7763--7775},
}

\bib{tsai}{article}{
	author={Tsai, C.-J.},
	author={Wang, M.-T.},
	title={A strong stability condition on minimal submanifolds and its
   implications},
   journal={J. Reine Angew. Math.},
   volume={764},
   date={2020},
   pages={111--156},
}

\bib{tsuiwang}{article}{
	author={Tsui, M.-P.},
	author={Wang, M.-T.},
	title={Mean curvature flows and isotopy of maps between spheres},
	journal={Comm. Pure Appl. Math.},
	volume={57},
	date={2004},
	pages={1110--1126},
}


\bib{wang}{article}{
	author={Wang, M.-T.},
	title={Long-time existence and convergence of graphic mean curvature flow in arbitrary codimension},
	journal={Invent. Math.},
	volume={148},
	date={2002},
	pages={525--543},
}
\bib{wang2}{article}{
	author={Wang, M.-T.},
	title={On graphic Bernstein type results in higher codimension},
	journal={Trans. Amer. Math. Soc.},
	volume={355},
	date={2003},
	pages={265--271},
}

\bib{yau}{article}{
	author={Yau, S.-T.},
	title={On the Ricci curvature of a compact K\"{a}hler manifold and the complex Monge-Amp\`ere equation. I},
	journal={Comm. Pure Appl. Math.},
	volume={31},
	date={1978},
	number={3},
	pages={339--411},
}

\end{biblist}
\end{bibdiv}

\end{document}